\newtheorem{theorem}{Theorem}
\theoremstyle{plain}
\newtheorem{corollary}{Corollary}
\newtheorem{definition}{Definition}
\newtheorem{lemma}{Lemma}
\newtheorem{remark}{Remark}
\numberwithin{equation}{section}
\begin{document}
\title[Recurrence relations involving the hypergeometric function]{%
Recurrence relations of coefficients involving hypergeometric function with
an application}
\author{Zhen-Hang Yang }
\address{State Grid Zhejiang Electric Power Company Research Institute,
Hangzhou, Zhejiang, 310014, Hangzhou, P. R. China}
\email{yzhkm@163.com}
\date{April 10, 2022}
\subjclass[2000]{Primary 33C05, 26B25; Secondary 26E60}
\keywords{Hypergeometric function, recurrence relation, Schur convexity,
mean value}
\dedicatory{Dedicated to my mother}

\begin{abstract}
For $a,b,p\in \mathbb{R}$, $-c\notin \mathbb{N\cup }\left\{ 0\right\} $ and $%
\theta \in \left[ -1,1\right] $, let%
\begin{equation*}
U_{\theta }\left( x\right) =\left( 1-\theta x\right) ^{p}F\left(
a,b;c;x\right) =\sum_{n=0}^{\infty }u_{n}\left( \theta \right) x^{n}.
\end{equation*}%
In this paper, we prove that the coefficients $u_{n}\left( \theta \right) $
for $n\geq 0$ satisfies a 3-order recurrence relation. In particular, $%
u_{n}\left( 1\right) $ satisfies a 2-order recurrence relation. These offer
a new way to study for hypergeometric function. As an example, we present
the necessary and sufficient conditions such that a hypergeometric mean
value is Schur m-power convex or concave on $\mathbb{R}_{+}^{2}$.
\end{abstract}

\maketitle

\section{Introduction}

For real numbers $a,b$, and $c$ with $c\neq 0,-1,-2,...$, the Gaussian
hypergeometric function is defined by%
\begin{equation*}
F\left( a,b;c;x\right) =\sum_{n=0}^{\infty }\frac{\left( a\right) _{n}\left(
b\right) _{n}}{\left( c\right) _{n}}\frac{x^{n}}{n!}
\end{equation*}%
for $x\in \left( -1,1\right) $, where $\left( a\right) _{n}$ denotes
Pochhammer symbol defined by%
\begin{equation*}
\left( a\right) _{n}=a\left( a+1\right) \cdot \cdot \cdot \left(
a+n-1\right) =\frac{\Gamma \left( n+a\right) }{\Gamma \left( a\right) },
\end{equation*}%
for $n=1,2,...$, and $(a)_{0}=1$ for $a\neq 0$, here $\Gamma
(x)=\int_{0}^{\infty }t^{x-1}e^{-t}dt$ $(x>0)$ is the gamma function.

The hypergeometric function $F(a;b;c;x)$ has the simple differentiation
formulas%
\begin{equation}
F^{\prime }\left( a,b;c;x\right) =\frac{ab}{c}F\left( a+1,b+1;c+1;x\right) .
\label{df}
\end{equation}%
The behavior of the hypergeometric function near $x=1$ in the three cases $%
a+b<c$, $a+b=c$, and $a+b>c$, $a,b,c>0$, is given by%
\begin{equation}
\left\{ 
\begin{array}{lc}
\bigskip F\left( a,b;c;1\right) =\dfrac{\Gamma \left( c\right) \Gamma \left(
c-a-b\right) }{\Gamma \left( c-a\right) \Gamma \left( c-b\right) } & \text{%
if }c>a+b\text{,} \\ 
\bigskip F\left( a,b;c;x\right) =\dfrac{R\left( a,b\right) -\ln \left(
1-x\right) }{B\left( a,b\right) }+O\left( \left( 1-x\right) \ln \left(
1-x\right) \right) & \text{if }c=a+b\text{,} \\ 
F\left( a,b;c;x\right) =(1-x)^{c-a-b}F(c-a,c-b;c;x) & \text{if }c<a+b\text{,}%
\end{array}%
\right.  \label{F-near1}
\end{equation}%
where%
\begin{equation*}
B\left( z,w\right) =\frac{\Gamma \left( z\right) \Gamma \left( w\right) }{%
\Gamma \left( z+w\right) }\text{, \ }\func{Re}\left( z\right) >0\text{, }%
\func{Re}\left( w\right) >0
\end{equation*}%
is the classical beta function,%
\begin{equation}
R\equiv R\left( a,b\right) =-2\gamma -\psi \left( a\right) -\psi \left(
b\right) \text{,}  \label{R(a,b)}
\end{equation}%
here $\psi \left( z\right) =\Gamma ^{\prime }\left( z\right) /\Gamma \left(
z\right) $, $\func{Re}\left( z\right) >0$ is the psi function and $\gamma $
is the Euler-Mascheroni constant. It should be noted that $F\left(
a,b;a+b;x\right) $ is called zero-balanced function.

In 2018, Yang \cite[Lemma 2]{Yang-JMAA-467-2018} established two recurrence
relations of coefficients of $\left( r^{\prime }\right) ^{p}K\left( r\right) 
$ and $\left( r^{\prime }\right) ^{p}E\left( r\right) $, where and in what
follows, $r^{\prime }=\sqrt{1-r^{2}}$.

\noindent \textbf{Proposition Y1}. \emph{We have}%
\begin{equation}
\left( r^{\prime }\right) ^{p}K\left( r\right) =\frac{\pi }{2}%
\sum_{n=0}^{\infty }a_{n}r^{2n},  \label{f}
\end{equation}%
\emph{where }$a_{0}=1$\emph{, }$a_{1}=1/4-p/2$\emph{\ and for }$n\geq 2$%
\emph{,}%
\begin{equation}
a_{n}=\frac{8n^{2}-4\left( p+3\right) n+\left( 2p+5\right) }{4n^{2}}a_{n-1}-%
\frac{\left( p-2n+3\right) ^{2}}{4n^{2}}a_{n-2};  \label{an}
\end{equation}%
\begin{equation}
\left( r^{\prime }\right) ^{p}E\left( r\right) =\frac{\pi }{2}%
\sum_{n=0}^{\infty }b_{n}r^{2n},  \label{g}
\end{equation}%
\emph{where }$b_{0}=1,b_{1}=-p/2-1/4$\emph{\ and for }$n\geq 1$%
\begin{equation}
b_{n+1}=\frac{1}{4}\frac{8n^{2}-4pn-2p-1}{\left( n+1\right) ^{2}}b_{n}-\frac{%
1}{4}\frac{\left( 2n-p-1\right) \left( 2n-p-3\right) }{\left( n+1\right) ^{2}%
}b_{n-1}.  \label{bn}
\end{equation}

In 2021, Yand and Tian \cite[Proposition 3]{Yang-RACSAM-115-2021} presented
another recurrence relation of those coefficients of $\left( 1-x\right)
^{-q}F\left( -1/2,-1/2;2;x\right) $.

\noindent \textbf{Proposition Y2}. \emph{Let }$q\in \mathbb{R}$. \emph{We
have}%
\begin{equation}
\left( 1-x\right) ^{-q}F\left( -\frac{1}{2},-\frac{1}{2};2;x\right)
=\sum_{n=0}^{\infty }u_{n}x^{n}  \label{f1}
\end{equation}%
\emph{with }$u_{0}=1$\emph{, }$u_{1}=q-1/8$\emph{\ and for }$n\geq 1$\emph{,}%
\begin{equation}
u_{n+1}=\frac{2n^{2}+\left( 2q+1\right) n+2q-1/4}{\left( n+1\right) \left(
n+2\right) }u_{n}-\frac{\left( n+q-1/2\right) \left( n+q-3/2\right) }{\left(
n+1\right) \left( n+2\right) }u_{n-1}.  \label{un+1-rr}
\end{equation}

Inspired the above results, the aim of this paper is to establish more
general recurrence relation of coefficients $u_{n}$ given by%
\begin{equation*}
U_{\theta }\left( x\right) =\left( 1-\theta x\right) ^{p}F\left(
a,b;c;x\right) =\sum_{n=0}^{\infty }u_{n}\left( \theta \right) x^{n}.
\end{equation*}%
As consequences, we also present recurrence relations of coefficients $%
u_{n}\left( \theta \right) $ for $\theta =-1,1$ and $v_{n}$ given by%
\begin{equation*}
V\left( x\right) =\ln \left( 1-x\right) \times F\left( a,b;c;x\right)
=\sum_{n=0}^{\infty }v_{n}x^{n}.
\end{equation*}%
As an application, we obtain necessary and sufficient conditions for which a
hypergeometric mean is Schur m-power convex or concave on $\mathbb{R}%
_{+}^{2} $.

\section{Main results}

We start with the useful lemma.

\begin{lemma}
\label{L-dF,F+}Let%
\begin{equation*}
\begin{array}{ccccc}
F\equiv F\left( a,b;c,z\right) , &  & F_{a-}\equiv F\left( a-1,b;c,z\right) ,
&  & F_{a+}\equiv F\left( a+1,b;c,z\right)%
\end{array}%
.
\end{equation*}%
Then%
\begin{eqnarray}
\frac{dF}{dz} &=&\frac{\left( c-a\right) F_{a-}+\left( a-c+bz\right) F}{%
z\left( 1-z\right) },  \label{dF} \\
\frac{dF_{a-}}{dz} &=&\frac{a-1}{z}\left( F-F_{a-}\right)  \label{dF-}
\end{eqnarray}
\end{lemma}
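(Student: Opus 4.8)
The plan is to prove both identities by matching the coefficients of $z^n$ on the two sides, so I set $F=\sum_{n\ge0}t_nz^n$ with $t_n=\frac{(a)_n(b)_n}{(c)_nn!}$ and $F_{a-}=\sum_{n\ge0}s_nz^n$ with $s_n=\frac{(a-1)_n(b)_n}{(c)_nn!}$. The only properties of the Pochhammer symbol needed are $(a)_n=(a+n-1)(a)_{n-1}$ and $(a-1)_n=(a-1)(a)_{n-1}$ for $n\ge1$ (and likewise for $(b)_n$, $(c)_n$), whence also $(a)_n-(a-1)_n=n(a)_{n-1}$. Since $c\notin\{0,-1,-2,\dots\}$, each $(c)_n\neq0$, so after multiplying a coefficient identity through by $(c)_nn!$ it becomes an honest polynomial identity in $a,b,c$; this is what makes the verification mechanical and also takes care of degenerate parameter values.

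I would dispatch \eqref{dF-} first. Differentiating the series of $F_{a-}$ termwise and using $(a-1)_n=(a-1)(a)_{n-1}$ gives $z\,dF_{a-}/dz=(a-1)\sum_{n\ge1}\frac{(a)_{n-1}(b)_n}{(c)_n(n-1)!}z^{n}$, while $(a)_n-(a-1)_n=n(a)_{n-1}$ gives $F-F_{a-}=\sum_{n\ge1}\frac{(a)_{n-1}(b)_n}{(c)_n(n-1)!}z^{n}$; the two series agree, which is \eqref{dF-} (and when $a=1$ both sides vanish identically, so there is nothing to prove in that case).

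For \eqref{dF} it is cleanest to clear the denominator and establish the equivalent polynomial identity $z(1-z)\,dF/dz=(c-a)F_{a-}+(a-c+bz)F$; then \eqref{dF} follows by division, the apparent $0/0$ at $z=0$ and $z=1$ being harmless since the right side also vanishes there (equivalently one reads \eqref{dF} off as a limit). Writing $z\,dF/dz=\sum_n nt_nz^n$, $z^2\,dF/dz=\sum_n(n-1)t_{n-1}z^n$, $bzF=\sum_n bt_{n-1}z^n$, $(a-c)F=\sum_n(a-c)t_nz^n$, and $(c-a)F_{a-}=\sum_n(c-a)s_nz^n$, the coefficient of $z^n$---after multiplying by $(c)_nn!$ and pulling out the common factor $(a)_{n-1}(b)_{n-1}$ via the Pochhammer recursions above---reduces to $(b+n-1)\big[(a+n-1)(n+c-a)-(c-a)(a-1)\big]=n(c+n-1)(b+n-1)$, hence to the one-line expansion $(a+n-1)(n+c-a)-(c-a)(a-1)=n(n+c-1)$. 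So the whole argument comes down to that algebraic identity together with the index shifts.

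A slightly lighter alternative I might present instead: first observe $z\,dF/dz=a(F_{a+}-F)$ (immediate from $(a+1)_n=\frac{a+n}{a}(a)_n$), then establish Gauss's three-term contiguous relation $(c-a)F_{a-}+\big(2a-c-(a-b)z\big)F+a(z-1)F_{a+}=0$ by the same coefficient comparison, and finally eliminate $F_{a+}$ between the two to obtain \eqref{dF}. In any case there is no real obstacle here: the substance is a single polynomial identity in the parameters, and the only thing requiring care is the bookkeeping of the index shifts and the $z(1-z)$ factor.
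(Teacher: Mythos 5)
Your argument is correct, but it is not the paper's route. The paper proves the lemma by quoting two known identities from Rainville, namely $z\,dF/dz+aF=aF_{a+}$ and $z\,dF_{a-}/dz=(a-1)(F-F_{a-})$, together with Gauss's contiguous relation $(c-a)F_{a-}+(2a-c-az+bz)F+a(z-1)F_{a+}=0$ from Abramowitz--Stegun, and then eliminates $F_{a+}$ to get (\ref{dF}); in particular (\ref{dF-}) is for the paper just a cited formula divided by $z$. Your primary argument instead verifies both identities from scratch by termwise comparison of power-series coefficients, and the computation checks out: for (\ref{dF}) the coefficient identity $(n+c-a)t_n-(c-a)s_n=(n-1+b)t_{n-1}$ does reduce, after the Pochhammer factorizations $(a)_n=(a+n-1)(a)_{n-1}$ and $(a-1)_n=(a-1)(a)_{n-1}$, to $(a+n-1)(n+c-a)-(c-a)(a-1)=n(n+c-1)$, which is a true polynomial identity; and for (\ref{dF-}) the identity $(a-1)\bigl(t_n-s_n\bigr)=ns_n$ follows from $(a)_n-(a-1)_n=n(a)_{n-1}$. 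What your approach buys is self-containedness (no reliance on external contiguous-relation tables) at the cost of some index bookkeeping; what the paper's approach buys is brevity, at the cost of importing three identities from the literature. Your ``lighter alternative'' is essentially the paper's proof, except that you would also supply coefficient-comparison proofs of the imported relations. The only points worth stating explicitly in a final write-up are the justification for termwise differentiation inside the disk of convergence and the remark (which you already make) that division by $z(1-z)$ is legitimate for $0<|z|<1$ with the $z=0$ value recovered by continuity.
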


\begin{proof}
It was shown in \cite[p. 51--52, (7) and (18)]{Rainville-SF-M-1960} that%
\begin{eqnarray}
z\frac{dF}{dz}+aF &=&aF_{a+},  \label{zdF} \\
z\frac{dF_{a-}}{dz} &=&\left( a-1\right) F-\left( a-1\right) F_{a-}.
\label{zdF-}
\end{eqnarray}%
These in combination with Gauss' contiguous function relation%
\begin{equation*}
\left( c-a\right) F_{a-}+\left( 2a-c-az+bz\right) F+a\left( z-1\right)
F_{a+}=0
\end{equation*}%
see \cite[p. 558, (15.2.10)]{Abramowitz-HMFFGMT-1972} give the desired
formulas.
\end{proof}

We now are in position to state and prove the general recurrence relation.

\begin{theorem}
\label{T-Ut}Let $a,b,p\in \mathbb{R}$, $-c\notin \mathbb{N\cup }\left\{
0\right\} $ and $\theta \in \left[ -1,1\right] $. Then we have%
\begin{equation}
U_{\theta }\left( x\right) =\left( 1-\theta x\right) ^{p}F\left(
a,b;c;x\right) =\sum_{n=0}^{\infty }u_{n}x^{n},  \label{Uth}
\end{equation}%
with $u_{0}=1$, $u_{1}=ab/c-p\theta $,%
\begin{equation}
u_{2}=\frac{1}{2}\theta ^{2}p\left( p-1\right) -\theta p\frac{ab}{c}+\frac{1%
}{2}\frac{ab\left( b+1\right) \left( a+1\right) }{c\left( c+1\right) },
\label{u2}
\end{equation}%
and for $n\geq 2$,%
\begin{equation}
u_{n+1}=\frac{\xi _{n,p,\theta }\left( a,b,c\right) }{\left( n+1\right)
\left( n+c\right) }u_{n}-\theta \dfrac{\eta _{n,p,\theta }\left(
a,b,c\right) }{\left( n+1\right) \left( n+c\right) }u_{n-1}+\theta ^{2}\frac{%
\lambda _{n,p}\left( a,b\right) }{\left( n+1\right) \left( n+c\right) }%
u_{n-2},  \label{un+1-n-2-rr}
\end{equation}%
where%
\begin{eqnarray*}
\xi _{n,p,\theta }\left( a,b,c\right) &=&\left( n+a\right) \left( n+b\right)
+\theta \left[ 2n^{2}-2n\left( p-c+1\right) -cp\right] , \\
\eta _{n,p,\theta }\left( a,b,c\right) &=&2n^{2}+2\left( a+b-p-2\right)
n-\left( a+b-1\right) p+2\left( a-1\right) \left( b-1\right) \\
&&+\theta \left( n-p-1\right) \left( n-p+c-2\right) , \\
\lambda _{n,p}\left( a,b\right) &=&\left( n+a-p-2\right) \left(
n+b-p-2\right) .
\end{eqnarray*}
\end{theorem}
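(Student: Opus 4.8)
The plan is to derive a linear ODE satisfied by $U_\theta$ and then convert it into the claimed recurrence by comparing coefficients of powers of $x$. The starting point is Lemma~\ref{L-dF,F+}: writing $F = F(a,b;c;x)$ and $G = F_{a-} = F(a-1,b;c;x)$, we have the coupled first-order system
\begin{equation*}
x(1-x)F' = (c-a)G + (a-c+bx)F, \qquad xG' = (a-1)(F - G).
\end{equation*}
Since $U_\theta = (1-\theta x)^p F$, one computes $U_\theta' = (1-\theta x)^{p-1}\bigl[(1-\theta x)F' - \theta p F\bigr]$, so $F$ and $F'$ (hence also $G$, after eliminating) can be expressed through $U_\theta$, $U_\theta'$ and powers of $(1-\theta x)$. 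The idea is to eliminate $G$ from the system: differentiate the first relation, substitute $G'$ from the second and $G$ from the first, to obtain a single second-order ODE for $F$ with polynomial coefficients (this is of course just the hypergeometric equation $x(1-x)F'' + [c-(a+b+1)x]F' - abF = 0$, which I could also invoke directly). Then substitute $F = (1-\theta x)^{-p}U_\theta$ into the hypergeometric equation; after multiplying through by $(1-\theta x)^{p+2}$ to clear denominators, this yields a second-order ODE for $U_\theta$ whose coefficients are polynomials in $x$ of degree at most $3$ (because of the extra factors $(1-\theta x)^2$ and the $x(1-x)$ in front of $U_\theta''$).

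Concretely, after the substitution the ODE takes the shape
\begin{equation*}
x(1-x)(1-\theta x)U_\theta'' + A(x)U_\theta' + B(x)U_\theta = 0,
\end{equation*}
where $A$ has degree $\le 2$ and $B$ has degree $\le 2$; the coefficients of $A$ and $B$ are explicit quadratics in $a,b,c,p,\theta$ coming from differentiating $(1-\theta x)^{-p}$ twice. I would compute $A$ and $B$ once and for all. Then plug in $U_\theta = \sum_{n\ge 0}u_n x^n$ and read off the coefficient of $x^{n+1}$ (say). The term $x(1-x)(1-\theta x)U_\theta''$ contributes a combination of $u_{n+1}, u_n, u_{n-1}, u_{n-2}$ with coefficients quadratic in $n$; $A(x)U_\theta'$ contributes $u_{n+1}, u_n, u_{n-1}, u_{n-2}$ similarly; and $B(x)U_\theta$ contributes $u_{n+1}, u_n, u_{n-1}$. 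Collecting, the coefficient of $u_{n+1}$ will be $(n+1)(n+c)$ (the indicial-type factor, coming from $x\cdot n(n+1) + c\cdot(n+1)$ in the lead terms), the coefficient of $u_n$ will be $\xi_{n,p,\theta}(a,b,c)$, the coefficient of $u_{n-1}$ will be $-\theta\,\eta_{n,p,\theta}(a,b,c)$, and the coefficient of $u_{n-2}$ will be $\theta^2\lambda_{n,p}(a,b)$. Matching these against the stated $\xi,\eta,\lambda$ is then a bookkeeping check: note in particular that $\lambda$ carries a factor $\theta^2$ as expected (it comes only from the $(1-\theta x)$-part, never from the pure hypergeometric part), and that $\eta$ carries a single $\theta$, consistent with its appearing multiplied by an external $\theta$ in \eqref{un+1-n-2-rr}. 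For the initial data, $u_0 = 1$ is immediate, and $u_1, u_2$ follow from multiplying the first few terms of $(1-\theta x)^p = 1 - p\theta x + \tfrac12 p(p-1)\theta^2 x^2 - \cdots$ by $F = 1 + \tfrac{ab}{c}x + \tfrac{a(a+1)b(b+1)}{2c(c+1)}x^2 + \cdots$, which reproduces the formula for $u_1$ and \eqref{u2}.

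The recurrence \eqref{un+1-n-2-rr} is asserted only for $n\ge 2$, which matches the fact that deriving the coefficient of $x^{n+1}$ from a degree-$3$ polynomial ODE involves $u_{n-2}$, so one needs $n-2 \ge 0$; the cases $n=0,1$ are absorbed into the explicit values $u_1, u_2$ (and would anyway degenerate since $u_{-1}, u_{-2}$ are not defined). I would also remark that the constraint $\theta\in[-1,1]$ is not actually needed for the algebraic identity — it only guarantees that $U_\theta$ is analytic on $(-1,1)$ so that the power series representation \eqref{Uth} is legitimate; the recurrence itself holds as a formal identity for all real $\theta$.

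The main obstacle is the bookkeeping: there are three polynomial pieces $x(1-x)(1-\theta x)U''$, $A(x)U'$, $B(x)U$ to expand, and $A, B$ themselves must be computed correctly by substituting $(1-\theta x)^{-p}U_\theta$ into the hypergeometric equation and tracking the $U'$ and $U$ terms produced by $\frac{d}{dx}(1-\theta x)^{-p} = p\theta(1-\theta x)^{-p-1}$ and its second derivative. Getting the signs and the $\theta$-powers right in $\xi,\eta,\lambda$ — especially verifying that the "pure" ($\theta$-independent) part of $\xi$ is $(n+a)(n+b)$ and of $\eta$ is $2n^2 + 2(a+b-p-2)n - (a+b-1)p + 2(a-1)(b-1)$ — is the delicate step; everything else is forced. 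One clean way to reduce the risk of error is to cross-check the final formulas against the two known special cases in the introduction: setting $(a,b,c) = (-1/2,-1/2,1/2)$ with the Legendre-form normalization should recover Proposition~Y1 (via the identities relating $K,E$ to ${}_2F_1$), and $(a,b,c,p,\theta) = (-1/2,-1/2,2,-q,1)$ should recover \eqref{un+1-rr} in Proposition~Y2.
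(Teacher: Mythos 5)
Your proposal is correct in substance but follows a genuinely different route from the paper. The paper never forms a second-order ODE: it stays with the first-order contiguous system of Lemma~\ref{L-dF,F+}, introduces the auxiliary series $U_{\theta}^{\ast}(x)=(1-\theta x)^{p}F(a-1,b;c;x)=\sum u_{n}^{\ast}x^{n}$, extracts two coupled relations between $\{u_{n}\}$ and $\{u_{n}^{\ast}\}$ by comparing coefficients, and then eliminates $u_{n}^{\ast}$; that elimination divides by $\theta(p+1)$, so the degenerate cases $\theta=0$ and $p=-1$ must be (and are) checked separately from the explicit Cauchy-product formula for $u_{n}$. Your route through the hypergeometric equation avoids that case split entirely: substituting $F=(1-\theta x)^{-p}U_{\theta}$ and clearing denominators gives a polynomial-coefficient ODE valid for all $\theta,p$, and the recurrence drops out uniformly, which is a genuine simplification. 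One correction to your bookkeeping, though: after multiplying by $(1-\theta x)^{p+2}$ the coefficient of $U_{\theta}''$ is $x(1-x)(1-\theta x)^{2}$, of degree \emph{four}, not $x(1-x)(1-\theta x)$ as you wrote (the coefficient of $U_{\theta}'$ then has degree three and that of $U_{\theta}$ degree two). The recurrence is still of order $3$ because $U''$ shifts indices down by two, and it is precisely the $-\theta^{2}x^{4}$ term of this quartic, contributing $-\theta^{2}(n-2)(n-3)u_{n-2}$, that supplies the $n^{2}$ part of $\theta^{2}\lambda_{n,p}(a,b)=\theta^{2}(n+a-p-2)(n+b-p-2)$; with a degree-three leading coefficient the $u_{n-2}$ term would come out wrong. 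With that fixed, the matching of the $u_{n+1}$ coefficient to $(n+1)(n+c)$ and of the remaining coefficients to $\xi$, $-\theta\eta$, $\theta^{2}\lambda$ goes through as you describe, and your treatment of $u_{0},u_{1},u_{2}$ and of the restriction $n\geq 2$ agrees with the paper's.
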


\begin{proof}[Proof of Theorem \protect\ref{T-Ut}]
Let%
\begin{equation}
U_{\theta }^{\ast }\left( x\right) =\left( 1-\theta x\right) ^{p}F\left(
a-1,b;c;x\right) =\sum_{n=0}^{\infty }u_{n}^{\ast }x^{n}.  \label{Ut*}
\end{equation}

Differentiating for $U_{\theta }$ yields%
\begin{equation*}
-p\theta \left( 1-\theta x\right) ^{p-1}F+\left( 1-\theta x\right)
^{p}F^{\prime }=\sum_{n=0}^{\infty }nu_{n}x^{n-1}.
\end{equation*}%
Substituting (\ref{dF}) into the above equation gives%
\begin{equation*}
-p\theta \left( 1-\theta x\right) ^{p-1}F+\left( 1-\theta x\right) ^{p}\frac{%
\left( c-a\right) F_{a-}+\left( a-c+bx\right) F}{x\left( 1-x\right) }%
=\sum_{n=0}^{\infty }nu_{n}x^{n-1},
\end{equation*}%
then multiplying by $x\left( 1-x\right) \left( 1-\theta x\right) $ leads to%
\begin{equation}
\begin{array}{c}
-p\theta x\left( 1-x\right) U_{\theta }\left( x\right) +\left( c-a\right)
\left( 1-\theta x\right) U_{\theta }^{\ast }\left( x\right) +\left(
a-c+bx\right) \left( 1-\theta x\right) U_{\theta }\left( x\right) \bigskip
\\ 
=\left( 1-x\right) \left( 1-\theta x\right) \sum_{n=0}^{\infty
}nu_{n}x^{n}.\bigskip%
\end{array}
\label{Ut-Ut*}
\end{equation}%
Expanding in power series yields%
\begin{eqnarray*}
&&-p\theta \left( \sum_{n=1}^{\infty }u_{n-1}x^{n}-\sum_{n=2}^{\infty
}u_{n-2}x^{n}\right) +\left( c-a\right) \left( \sum_{n=0}^{\infty
}u_{n}^{\ast }x^{n}-\theta \sum_{n=1}^{\infty }u_{n-1}^{\ast }x^{n}\right) \\
&&+\left( a-c\right) \sum_{n=0}^{\infty }u_{n}x^{n}+\left( b-\theta \left(
a-c\right) \right) \sum_{n=1}^{\infty }u_{n-1}x^{n}-b\theta
\sum_{n=2}^{\infty }u_{n-2}x^{n} \\
&=&\sum_{n=1}^{\infty }nu_{n}x^{n}-\left( \theta +1\right)
\sum_{n=2}^{\infty }\left( n-1\right) u_{n-1}x^{n}+\theta \sum_{n=2}^{\infty
}\left( n-2\right) u_{n-2}x^{n},
\end{eqnarray*}%
which can be arranged as%
\begin{eqnarray*}
&&\left( c-a\right) \left( u_{0}^{\ast }-u_{0}\right) +\left[ -p\theta
u_{0}+\left( c-a\right) \left( u_{1}^{\ast }-\theta u_{0}^{\ast }\right)
+\left( a-c\right) u_{1}+\left( b-\theta \left( a-c\right) \right) u_{0}%
\right] x \\
&&+\sum_{n=2}^{\infty }\left[ \left( c-a\right) \left( u_{n}^{\ast }-\theta
u_{n-1}^{\ast }\right) +\left( a-c\right) u_{n}+\left( b-\theta \left(
a-c+p\right) \right) u_{n-1}+\left( p-b\right) \theta u_{n-2}\right] x^{n} \\
&=&u_{1}x+\sum_{n=2}^{\infty }\left[ nu_{n}-\left( \theta +1\right) \left(
n-1\right) u_{n-1}+\theta \left( n-2\right) u_{n-2}\right] x^{n}.
\end{eqnarray*}%
Comparing coefficients of $x^{n}$ gives $\left( c-a\right) \left(
u_{0}^{\ast }-u_{0}\right) =0$,%
\begin{equation*}
\left( c-a\right) \left( u_{1}^{\ast }-\theta u_{0}^{\ast }\right) +\left(
b-\theta \left( a-c+p\right) \right) u_{0}+\left( a-c\right) u_{1}=u_{1},
\end{equation*}%
\begin{eqnarray*}
&&\left( c-a\right) \left( u_{n}^{\ast }-\theta u_{n-1}^{\ast }\right)
+\left( p-b\right) \theta u_{n-2}+\left( b-\theta \left( a-c+p\right)
\right) u_{n-1}+\left( a-c\right) u_{n} \\
&=&\theta \left( n-2\right) u_{n-2}-\left( \theta +1\right) \left(
n-1\right) u_{n-1}+nu_{n}\text{ for }n\geq 2\text{,}
\end{eqnarray*}%
where the third relation can be changed to 
\begin{equation}
\begin{array}{r}
\left( c-a\right) \left( u_{n}^{\ast }-\theta u_{n-1}^{\ast }\right) =\left(
n-a+c\right) u_{n}+\theta \left( n-2+b-p\right) u_{n-2}\bigskip \\ 
+\left[ \theta \left( a-c+p\right) -b-\left( \theta +1\right) \left(
n-1\right) \right] u_{n-1}\text{ for }n\geq 2\text{.}\bigskip%
\end{array}
\label{un*-un-r1}
\end{equation}

In a similar way, differentiating for $U_{\theta }^{\ast }$ yields%
\begin{equation*}
-p\theta \left( 1-\theta x\right) ^{p-1}F_{a-}+\left( 1-\theta x\right)
^{p}F_{a-}^{\prime }=\sum_{n=0}^{\infty }nu_{n}^{\ast }x^{n-1}.
\end{equation*}%
Substituting (\ref{dF-}) into the above equation gives%
\begin{equation*}
-p\theta \left( 1-\theta x\right) ^{p-1}F_{a-}+\left( 1-\theta x\right) ^{p}%
\frac{a-1}{x}\left( F-F_{a-}\right) =\sum_{n=0}^{\infty }nu_{n}^{\ast
}x^{n-1}
\end{equation*}%
then multiplying by $x\left( 1-\theta x\right) $ lead to%
\begin{equation*}
-p\theta xU_{\theta }^{\ast }\left( x\right) +\left( a-1\right) \left(
1-\theta x\right) \left( U_{\theta }\left( x\right) -U_{\theta }^{\ast
}\left( x\right) \right) =\left( 1-\theta x\right) \sum_{n=0}^{\infty
}nu_{n}^{\ast }x^{n}.
\end{equation*}%
Expanding in power series yields%
\begin{equation*}
-p\theta \sum_{n=1}^{\infty }u_{n-1}^{\ast }x^{n}+\left( a-1\right) \left(
1-\theta x\right) \sum_{n=0}^{\infty }\left( u_{n}-u_{n}^{\ast }\right)
x^{n}=\left( 1-\theta x\right) \sum_{n=0}^{\infty }nu_{n}^{\ast }x^{n},
\end{equation*}%
which can be arranged as%
\begin{eqnarray*}
&&\left( a-1\right) \left( u_{0}-u_{0}^{\ast }\right) -p\theta
\sum_{n=1}^{\infty }u_{n-1}^{\ast }x^{n}+\left( a-1\right)
\sum_{n=1}^{\infty }\left( u_{n}-\theta u_{n-1}-u_{n}^{\ast }+\theta
u_{n-1}^{\ast }\right) x^{n} \\
&=&\sum_{n=1}^{\infty }\left( nu_{n}^{\ast }-\theta \left( n-1\right)
u_{n-1}^{\ast }\right) x^{n}.
\end{eqnarray*}%
Comparing coefficients of $x^{n}$ gives $\left( a-1\right) \left(
u_{0}-u_{0}^{\ast }\right) =0$ and%
\begin{equation*}
-p\theta u_{n-1}^{\ast }+\left( a-1\right) \left( u_{n}-\theta
u_{n-1}-u_{n}^{\ast }+\theta u_{n-1}^{\ast }\right) =nu_{n}^{\ast }-\theta
\left( n-1\right) u_{n-1}^{\ast }\text{ for }n\geq 1,
\end{equation*}%
where the second relation can be written as%
\begin{equation}
\left( n+a-1\right) u_{n}^{\ast }-\theta \left( n-2+a-p\right) u_{n-1}^{\ast
}=\left( a-1\right) u_{n}-\theta \left( a-1\right) u_{n-1}\text{ for }n\geq 1%
\text{.}  \label{un*-un-r2}
\end{equation}

Solving the Eqs. (\ref{un*-un-r1}) and (\ref{un*-un-r2}) for $u_{n-1}^{\ast
} $ with $n\geq 2$ gives%
\begin{equation}
\begin{array}{c}
\theta \left( p+1\right) \left( a-c\right) u_{n-1}^{\ast }=n\left(
n+c-1\right) u_{n}+\theta \left( n+a-1\right) \left( n+b-p-2\right)
u_{n-2}\bigskip \\ 
-\left[ \left( \theta +1\right) n^{2}+\left( a+b+\left( c-p-2\right) \theta
-2\right) n+\left( a-1\right) \left( b-\left( p+1\right) \theta -1\right) %
\right] u_{n-1}.\bigskip%
\end{array}
\label{un*-un-n-2}
\end{equation}%
When $\theta \left( p+1\right) \neq 0$, substituting the expressions of $%
\left( c-a\right) u_{n-1}^{\ast }$ and $\left( c-a\right) u_{n}^{\ast }$
into (\ref{un*-un-r1}) and arranging give%
\begin{equation}
\begin{array}{l}
u_{n+1}-\dfrac{\left( 2\theta +1\right) n^{2}+\left( a+b+2\theta \left(
c-1-p\right) \right) n+\left( ab-cp\theta \right) }{\left( n+1\right) \left(
n+c\right) }u_{n}\bigskip \\ 
+\theta \dfrac{%
\begin{array}{c}
\left( \theta +2\right) n^{2}+\left( 2a+2b-2p+\theta \left( c-2p-3\right)
-4\right) n \\ 
+\theta \left( p+1\right) \left( p-c+2\right) +2ab-\left( p+2\right) \left(
a+b-1\right)%
\end{array}%
}{\left( n+1\right) \left( n+c\right) }u_{n-1}\bigskip \\ 
-\theta ^{2}\dfrac{\left( n+a-p-2\right) \left( n+b-p-2\right) }{\left(
n+1\right) \left( n+c\right) }u_{n-2}=0\text{ for }n\geq 2,\bigskip%
\end{array}
\label{un+1-n-2-rra}
\end{equation}%
which implies (\ref{un+1-n-2-rr}).

On the other hand, by the Cauchy product formula we have%
\begin{equation*}
u_{n}=\sum_{k=0}^{n}\frac{\left( a\right) _{k}\left( b\right) _{k}}{k!\left(
c\right) _{k}}\frac{\theta ^{n-k}\left( -p\right) _{n-k}}{\left( n-k\right) !%
},
\end{equation*}%
which yields $u_{0}=1$,%
\begin{equation*}
u_{1}=\frac{ab}{c}-p\theta \text{ \ and \ }u_{2}=\frac{1}{2}\theta
^{2}p\left( p-1\right) -\theta p\frac{ab}{c}+\frac{1}{2}\frac{ab\left(
b+1\right) \left( a+1\right) }{c\left( c+1\right) }.
\end{equation*}

When $\theta =0$, the recurrence relation (\ref{un+1-n-2-rr}) is obviously
true. When $p=-1$, we see that%
\begin{equation*}
u_{n}=\sum_{k=0}^{n}\frac{\left( a\right) _{k}\left( b\right) _{k}}{k!\left(
c\right) _{k}}\theta ^{n-k},
\end{equation*}%
which satisfies%
\begin{equation*}
u_{n+1}-\theta u_{n}=\frac{\left( a\right) _{n+1}\left( b\right) _{n+1}}{%
\left( n+1\right) !\left( c\right) _{n+1}}.
\end{equation*}%
This also satisfied the recurrence relation (\ref{un+1-n-2-rr}), and the
proof is done.
\end{proof}

Taking $\theta =-1$ in Theorem \ref{T-Ut} we have the following corollary.

\begin{corollary}
\label{C-th=-1}Let $a,b,p\in \mathbb{R}$, $-c\notin \mathbb{N\cup }\left\{
0\right\} $. Then we have%
\begin{equation*}
U_{-1}\left( x\right) =\left( 1+x\right) ^{p}F\left( a,b;c;x\right)
=\sum_{n=0}^{\infty }u_{n}x^{n},
\end{equation*}%
with $u_{0}=1$, $u_{1}=ab/c+p$,%
\begin{equation*}
u_{2}=\frac{1}{2}p\left( p-1\right) +p\frac{ab}{c}+\frac{1}{2}\frac{ab\left(
b+1\right) \left( a+1\right) }{c\left( c+1\right) },
\end{equation*}%
and for $n\geq 2$,%
\begin{equation*}
u_{n+1}=\frac{\xi _{n,p}\left( a,b,c\right) }{\left( n+1\right) \left(
n+c\right) }u_{n}+\dfrac{\eta _{n,p}\left( a,b,c\right) }{\left( n+1\right)
\left( n+c\right) }u_{n-1}+\frac{\lambda _{n,p}\left( a,b\right) }{\left(
n+1\right) \left( n+c\right) }u_{n-2},
\end{equation*}%
where%
\begin{eqnarray*}
\xi _{n,p}\left( a,b,c\right) &=&-n^{2}+\left( a+b-2c+2p+2\right) n+\left(
ab+cp\right) , \\
\eta _{n,p}\left( a,b,c\right) &=&\left( n+2a+2b-c\right) \left( n-1\right)
-p^{2}-\left( a+b-c+2\right) p+2ab, \\
\lambda _{n,p}\left( a,b\right) &=&\left( n+a-p-2\right) \left(
n+b-p-2\right) .
\end{eqnarray*}
\end{corollary}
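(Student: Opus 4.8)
The plan is to obtain Corollary \ref{C-th=-1} as a pure specialization of Theorem \ref{T-Ut} at $\theta=-1$, so that no new analytic input is needed and everything reduces to polynomial bookkeeping. First I would substitute $\theta=-1$ into the initial data $u_{0}=1$, $u_{1}=ab/c-p\theta$ and into (\ref{u2}). Since $\theta^{2}=1$ and $-\theta=1$, these collapse at once to $u_{1}=ab/c+p$ and $u_{2}=\tfrac12 p(p-1)+p\,ab/c+\tfrac12\frac{ab(a+1)(b+1)}{c(c+1)}$, which are exactly the claimed values.

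Next, in the recurrence (\ref{un+1-n-2-rr}) the factors $-\theta$ and $\theta^{2}$ both become $+1$ when $\theta=-1$, so it becomes $u_{n+1}=\frac{1}{(n+1)(n+c)}\bigl[\xi_{n,p,-1}(a,b,c)\,u_{n}+\eta_{n,p,-1}(a,b,c)\,u_{n-1}+\lambda_{n,p}(a,b)\,u_{n-2}\bigr]$. It then only remains to evaluate $\xi_{n,p,\theta}$, $\eta_{n,p,\theta}$, $\lambda_{n,p}$ at $\theta=-1$ and collect powers of $n$. For $\xi$, one gets $(n+a)(n+b)-2n^{2}+2n(p-c+1)+cp=-n^{2}+(a+b-2c+2p+2)n+(ab+cp)$, matching $\xi_{n,p}$; and $\lambda$ is unchanged. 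The only mildly involved piece is $\eta$: I would expand $-(n-p-1)(n-p+c-2)=-n^{2}+(2p-c+3)n-p^{2}+(c-3)p+c-2$, add it to $2n^{2}+2(a+b-p-2)n-(a+b-1)p+2(a-1)(b-1)$, and collect to get $n^{2}+(2a+2b-c-1)n-p^{2}-(a+b-c+2)p+\bigl[2(a-1)(b-1)+c-2\bigr]$. One then recognizes this as $(n+2a+2b-c)(n-1)-p^{2}-(a+b-c+2)p+2ab$ by using the elementary identity $2(a-1)(b-1)+c-2=2ab-2a-2b+c=2ab-(2a+2b-c)$; this is precisely $\eta_{n,p}(a,b,c)$.

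The one point deserving a word of care is the hypothesis $\theta(p+1)\neq 0$ under which the main computation in the proof of Theorem \ref{T-Ut} was carried out; at $\theta=-1$ this excludes $p=-1$. However, Theorem \ref{T-Ut} was separately verified at $p=-1$ (where its recurrence in fact holds as an identity), so Corollary \ref{C-th=-1} is valid for every $p\in\mathbb{R}$ as stated. There is no genuine obstacle in the argument; the only places an error could creep in are a sign slip in the $-\theta$ multiplying $\eta$ and in the expansion of $(n-p-1)(n-p+c-2)$, so those two steps are the ones I would recheck.
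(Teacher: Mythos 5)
Your proposal is correct and is exactly the paper's route: the paper gives no separate proof of Corollary \ref{C-th=-1}, stating only that it follows by taking $\theta=-1$ in Theorem \ref{T-Ut}, and your algebraic verification of $\xi_{n,p,-1}$, $\eta_{n,p,-1}$, $\lambda_{n,p}$ and the initial values checks out. Your remark that the case $p=-1$ (excluded by the hypothesis $\theta(p+1)\neq 0$ in the main computation) is already covered separately in the proof of Theorem \ref{T-Ut} is also accurate.
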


Taking $\theta =1$ we can obtain a 3-order recurrence relation of
coefficients of $U_{1}\left( x\right) $. From the proof of Theorem \ref{T-Ut}%
, however, the order can be reduced from 3 to 2.

\begin{corollary}
\label{C-th=1}Let $a,b,p\in \mathbb{R}$, $-c\notin \mathbb{N\cup }\left\{
0\right\} $ . Then we have%
\begin{equation*}
U_{1}\left( x\right) =\left( 1-x\right) ^{p}F\left( a,b;c;x\right)
=\sum_{n=0}^{\infty }u_{n}x^{n},
\end{equation*}%
with $u_{0}=1$, $u_{1}=ab/c-p$ and for $n\geq 1$,%
\begin{equation}
u_{n+1}=2\alpha _{n}u_{n}-\beta _{n}u_{n-1},  \label{u-th=1-rr}
\end{equation}%
where%
\begin{eqnarray}
\alpha _{n} &=&\dfrac{1}{2}\dfrac{2n^{2}+\left( a+b+c-2p-1\right) n+ab-cp}{%
\left( n+1\right) \left( n+c\right) },  \label{aln} \\
\beta _{n} &=&\dfrac{\left( n+a-p-1\right) \left( n+b-p-1\right) }{\left(
n+1\right) \left( n+c\right) }.  \label{ben}
\end{eqnarray}
\end{corollary}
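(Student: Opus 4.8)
The starting point is that for $\theta=1$ the function $U_{1}=\left( 1-x\right) ^{p}F\left( a,b;c;x\right) $ is a gauge transform of a solution of the hypergeometric equation, so it satisfies a second-order linear ODE whose power-series coefficients obey a three-term recurrence. Concretely I would substitute $F=\left( 1-x\right) ^{-p}U_{1}$ into the hypergeometric equation $x\left( 1-x\right) F^{\prime \prime }+\left[ c-\left( a+b+1\right) x\right] F^{\prime }-abF=0$ and multiply through by $\left( 1-x\right) ^{p+1}$, obtaining
\begin{equation*}
x\left( 1-x\right) ^{2}U_{1}^{\prime \prime }+\left( 1-x\right) \left[ c+\left( 2p-a-b-1\right) x\right] U_{1}^{\prime }+\left[ \left( pc-ab\right) +\left( p-a\right) \left( p-b\right) x\right] U_{1}=0.
\end{equation*}
This is also precisely the mechanism behind the remark that the order drops from $3$ to $2$: when $\theta=1$ the factor $1-\theta x$ in (\ref{Ut-Ut*}) and in the companion identity for $U_{\theta }^{\ast }$ from the proof of Theorem \ref{T-Ut} coincides with the factor $1-x$ produced by $F^{\prime }$, so a common factor $1-x$ cancels and those identities collapse to the two first-order relations $\left( c-a\right) U_{1}^{\ast }=x\left( 1-x\right) U_{1}^{\prime }+\left[ \left( c-a\right) +\left( p-b\right) x\right] U_{1}$ and $x\left( 1-x\right) U_{1}^{\ast \prime }+\left[ \left( a-1\right) \left( 1-x\right) +px\right] U_{1}^{\ast }=\left( a-1\right) \left( 1-x\right) U_{1}$; eliminating $U_{1}^{\ast }$ between them reproduces exactly the ODE above.

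Next I would substitute $U_{1}\left( x\right) =\sum_{n\geq 0}u_{n}x^{n}$ into the ODE and compare the coefficient of $x^{n}$. For every $n\geq 1$ this yields
\begin{equation*}
\left( n+1\right) \left( n+c\right) u_{n+1}=\left[ 2n^{2}+\left( a+b+c-2p-1\right) n+ab-cp\right] u_{n}-\left( n+a-p-1\right) \left( n+b-p-1\right) u_{n-1},
\end{equation*}
which is exactly (\ref{u-th=1-rr}) with $\alpha _{n}$ and $\beta _{n}$ as in (\ref{aln})--(\ref{ben}); one checks that the formula already holds at $n=1$ and $n=2$ because the pieces it appears to be missing there (coming from $x^{3}U_{1}^{\prime \prime }$ and from $x^{2}U_{1}^{\prime }$) carry the vanishing factors $\left( n-1\right) \left( n-2\right) $ and $\left( n-1\right) $. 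Finally $u_{0}=U_{1}\left( 0\right) =1$, and the coefficient of $x^{0}$ in the ODE reads $cu_{1}+\left( pc-ab\right) u_{0}=0$, whence $u_{1}=ab/c-p$ (this also follows from $U_{1}^{\prime }\left( 0\right) =-p+F^{\prime }\left( 0\right) $ via (\ref{df}), or from the Cauchy product $u_{n}=\sum_{k=0}^{n}\frac{\left( a\right) _{k}\left( b\right) _{k}}{k!\left( c\right) _{k}}\left( -1\right) ^{n-k}\binom{p}{n-k}$ already recorded in the proof of Theorem \ref{T-Ut}).

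The only real work is computational: clearing $\left( 1-x\right) ^{p+1}$ through the hypergeometric equation and then collecting the coefficient of $x^{n}$ involve a handful of cancellations, and one has to recognize the two surviving polynomial coefficients in the contracted form $2n^{2}+\left( a+b+c-2p-1\right) n+ab-cp$ and the factored form $\left( n+a-p-1\right) \left( n+b-p-1\right) $. There is no conceptual obstacle, and in this route no restriction on the parameters is needed: multiplying $F$ by $\left( 1-x\right) ^{p}$ is a gauge transformation that keeps the equation second order with the same three singular points $0,1,\infty $, which is why the recurrence stays $3$-term; for $\theta \in \left( -1,1\right) \setminus \left\{ 0\right\} $ the factor $\left( 1-\theta x\right) ^{p}$ instead introduces a fourth singular point at $1/\theta $, forcing the $4$-term recurrence (\ref{un+1-n-2-rr}).
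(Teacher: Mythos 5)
Your proof is correct, and it takes a genuinely different route from the paper. The paper works throughout with the auxiliary series $U_{1}^{\ast}=(1-x)^{p}F(a-1,b;c;x)$: it divides the identity (\ref{Ut-Ut*}) by $1-x$, compares coefficients to get the first-order relation $(c-a)u_{n}^{\ast}=(n-a+c)u_{n}-(n-1+b-p)u_{n-1}$, and then eliminates $u_{n}^{\ast},u_{n-1}^{\ast}$ against (\ref{un*-un-r2}) with $\theta=1$; this forces a separate (easy) treatment of the degenerate case $c=a$. You instead gauge-transform the hypergeometric ODE: substituting $F=(1-x)^{-p}U_{1}$ and clearing $(1-x)^{p+1}$ does give
\begin{equation*}
x(1-x)^{2}U_{1}^{\prime\prime}+(1-x)\left[c+(2p-a-b-1)x\right]U_{1}^{\prime}+\left[(pc-ab)+(p-a)(p-b)x\right]U_{1}=0,
\end{equation*}
and I have checked that reading off the coefficient of $x^{n}$ reproduces (\ref{u-th=1-rr}) with exactly (\ref{aln})--(\ref{ben}) for all $n\geq1$ (your observation that the apparently missing terms at $n=1,2$ carry the factors $(n-1)(n-2)$ and $(n-1)$ is the right way to dispose of the boundary cases), while $n=0$ gives $cu_{1}+(pc-ab)u_{0}=0$, i.e.\ $u_{1}=ab/c-p$. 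What your approach buys: no case split on $c=a$, no contiguous relations, and a conceptual explanation (via the singular-point count of the transformed equation) of why the recurrence is three-term precisely when $\theta=1$, which the paper only observes empirically. What the paper's approach buys: it is a uniform specialization of the machinery already set up for Theorem \ref{T-Ut}, so no new identity (the hypergeometric ODE) needs to be invoked. Your two ``collapsed'' first-order relations for $U_{1}$ and $U_{1}^{\ast}$ are also correct and do recover the ODE upon elimination, though that remark is not needed for the proof.
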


\begin{proof}[Proof of Corollary \protect\ref{C-th=1}]
When $\theta =1$, dividing $\left( 1-x\right) $ by the relation (\ref{Ut-Ut*}%
) gives%
\begin{equation}
-pxU_{1}\left( x\right) +\left( c-a\right) U_{1}^{\ast }\left( x\right)
+\left( a-c+bx\right) U_{1}\left( x\right) =\left( 1-x\right)
\sum_{n=0}^{\infty }nu_{n}x^{n}.
\end{equation}%
Expanding in power series yields%
\begin{eqnarray*}
&&-p\sum_{n=1}^{\infty }u_{n-1}x^{n}+\left( c-a\right) \sum_{n=0}^{\infty
}u_{n}^{\ast }x^{n}+\left( a-c\right) \sum_{n=0}^{\infty
}u_{n}x^{n}+b\sum_{n=1}^{\infty }u_{n-1}x^{n} \\
&=&\sum_{n=1}^{\infty }nu_{n}x^{n}-\sum_{n=1}^{\infty }\left( n-1\right)
u_{n-1}x^{n},
\end{eqnarray*}%
Comparing coefficients of $x^{n}$ gives $\left( c-a\right) \left(
u_{0}^{\ast }-u_{0}\right) =0$,%
\begin{equation*}
-pu_{n-1}+\left( c-a\right) u_{n}^{\ast }+\left( a-c\right)
u_{n}+bu_{n-1}=nu_{n}-\left( n-1\right) u_{n-1}\text{ for }n\geq 1\text{,}
\end{equation*}%
which implies%
\begin{equation}
\left( c-a\right) u_{n}^{\ast }=\left( n-a+c\right) u_{n}-\left(
n-1+b-p\right) u_{n-1}\text{ for }n\geq 1\text{.}  \label{un*-un,n-1}
\end{equation}%
When $c\neq a$, eliminating $u_{n}^{\ast }$ and $u_{n-1}^{\ast }$ from the
Eqs. (\ref{un*-un,n-1}) and (\ref{un*-un-r2}) for $\theta =1$ yields%
\begin{eqnarray*}
u_{n} &=&\frac{2\left( n-1\right) ^{2}+\left( a+b+c-2p-1\right) \left(
n-1\right) +ab-cp}{n\left( c+n-1\right) }u_{n-1} \\
&&-\frac{\left( n+a-p-2\right) \left( n+b-p-2\right) }{n\left( c+n-1\right) }%
u_{n-2}\text{ for }n\geq 2.
\end{eqnarray*}%
Replacing $n$ by $n+1$ give the desired recurrence formula (\ref{u-th=1-rr}).

When $c=a$, from the relation (\ref{un*-un,n-1}) we have%
\begin{equation*}
u_{n}=\frac{n-1+b-p}{n}u_{n-1}\text{ for }n\geq 1,
\end{equation*}%
which, by an easy check, satisfies the recurrence relation (\ref{u-th=1-rr}).

The values of $u_{0}$ and $u_{1}$ follows Theorem \ref{T-Ut}, and the proof
is done.
\end{proof}

\begin{corollary}
\label{C-U0}Let $a,b,p\in \mathbb{R}$, $-c\notin \mathbb{N\cup }\left\{
0\right\} $. Then we have%
\begin{equation}
V\left( x\right) =\ln \left( 1-x\right) \times F\left( a,b;c;x\right)
=\sum_{n=0}^{\infty }v_{n}x^{n},  \label{V}
\end{equation}%
with $v_{0}=0$, $v_{1}=-1$ and for $n\geq 1$,%
\begin{equation}
v_{n+1}=2\alpha _{n}v_{n}-\beta _{n}v_{n-1}+\gamma _{n}w_{n},
\label{vn+1-n-1-rr}
\end{equation}%
where $\alpha _{n}$, $\beta _{n}$ are defined in (\ref{aln}), (\ref{ben}),
and%
\begin{eqnarray*}
\gamma _{n} &=&\frac{\left( c-b-a\right) n^{2}+\left( a+b-2ab\right)
n-c\left( a-1\right) \left( b-1\right) }{\left( n+1\right) \left(
n+a-1\right) \left( n+b-1\right) \left( n+c\right) }, \\
w_{n} &=&\frac{\left( a\right) _{n}\left( b\right) _{n}}{n!\left( c\right)
_{n}}=\frac{\Gamma \left( c\right) }{\Gamma \left( a\right) \Gamma \left(
b\right) }\frac{\Gamma \left( n+a\right) \Gamma \left( n+b\right) }{n!\Gamma
\left( n+c\right) }.
\end{eqnarray*}
\end{corollary}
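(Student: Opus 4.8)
The plan is to recover $V$, together with the recurrence for its Taylor coefficients $v_n$, by differentiating Corollary \ref{C-th=1} with respect to the exponent $p$. Since $\partial_p(1-x)^p=(1-x)^p\ln(1-x)$, for each fixed $x\in(-1,1)$ one has
\[
\left.\frac{\partial}{\partial p}\right|_{p=0}U_1(x)=\left.\frac{\partial}{\partial p}\right|_{p=0}(1-x)^pF(a,b;c;x)=\ln(1-x)\,F(a,b;c;x)=V(x),
\]
where $U_1$ is the function of Corollary \ref{C-th=1}. Write $u_n=u_n(p)$ for its coefficients. By the Cauchy product formula from the proof of Theorem \ref{T-Ut} (with $\theta=1$), $u_n(p)=\sum_{k=0}^{n}w_k(-p)_{n-k}/(n-k)!$ is a polynomial in $p$, the $x$-expansion of $U_1$ has radius of convergence $\ge1$ locally uniformly in $p$, so differentiation under the sum in $p$ is legitimate and $v_n=u_n'(0)$; moreover $U_1(x;0)=F(a,b;c;x)$ gives $u_n(0)=w_n$. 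From $u_0=1$ and $u_1=ab/c-p$ we get at once $v_0=u_0'(0)=0$ and $v_1=u_1'(0)=-1$, the stated initial values.

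Next I would differentiate the second-order recurrence $u_{n+1}(p)=2\alpha_n(p)u_n(p)-\beta_n(p)u_{n-1}(p)$ of Corollary \ref{C-th=1} (valid for all $n\ge1$ and all $p$) in $p$ and set $p=0$, obtaining
\[
v_{n+1}=2\alpha_nv_n-\beta_nv_{n-1}+\bigl(2\alpha_n'(0)\,w_n-\beta_n'(0)\,w_{n-1}\bigr),
\]
where now $\alpha_n=\alpha_n(0)$ and $\beta_n=\beta_n(0)$ coincide with $(\ref{aln})$ and $(\ref{ben})$. The explicit expressions in $(\ref{aln})$, $(\ref{ben})$ give immediately
\[
2\alpha_n'(0)=-\frac{2n+c}{(n+1)(n+c)},\qquad \beta_n'(0)=-\frac{2n+a+b-2}{(n+1)(n+c)},
\]
so the only thing left is to verify that $2\alpha_n'(0)\,w_n-\beta_n'(0)\,w_{n-1}=\gamma_n w_n$.

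For this I would use the ratio $w_{n-1}/w_n=n(n+c-1)/\bigl[(n+a-1)(n+b-1)\bigr]$ (equivalently the identity $(n+a-1)(n+b-1)\,w_{n-1}=n(n+c-1)\,w_n$), which turns the left-hand side into
\[
\frac{-(2n+c)(n+a-1)(n+b-1)+(2n+a+b-2)\,n(n+c-1)}{(n+1)(n+c)(n+a-1)(n+b-1)}\,w_n;
\]
expanding the numerator, the $n^3$-terms cancel and the remainder is $(c-a-b)n^2+(a+b-2ab)n-c(a-1)(b-1)$, so the expression is exactly $\gamma_n w_n$ and the recurrence $(\ref{vn+1-n-1-rr})$ follows.

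Everything here is routine and I expect no genuine obstacle; the two points that deserve a word are the justification of term-by-term $p$-differentiation — immediate from the fact that each $u_n(p)$ is a polynomial in $p$ together with local uniform convergence in $x$ — and the reading of the identity $2\alpha_n'(0)w_n-\beta_n'(0)w_{n-1}=\gamma_n w_n$ in the exceptional cases where a factor $n+a-1$ or $n+b-1$ vanishes: then $w_n=0$ and the identity is meant after clearing denominators, so no separate case analysis is needed. A self-contained alternative, mirroring the proof of Corollary \ref{C-th=1}, is to set $V^\ast(x)=\ln(1-x)F(a-1,b;c;x)$, derive from $(\ref{dF})$ and $(\ref{dF-})$ two functional equations relating $V$, $V^\ast$ and the source term $-xF=-\sum_{n\ge1}w_{n-1}x^n$, eliminate $V^\ast$, and compare coefficients; the term $\gamma_n w_n$ then arises precisely from that source term.
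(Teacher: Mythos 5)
Your proof is correct and takes essentially the same route as the paper: both obtain $v_n$ as the $p$-derivative at $p=0$ (the paper phrases it as the limit of the difference quotient $(u_n-w_n)/p$) of the coefficients in Corollary \ref{C-th=1}, and both recover the correction term $\gamma_n w_n$ from the $p$-variation of $\alpha_n,\beta_n$ combined with the ratio identities for $w_{n\pm1}/w_n$. The algebra you outline is exactly the computation carried out in the paper.
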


\begin{proof}[Proof of Corollary \protect\ref{C-U0}]
Notice that%
\begin{equation*}
F\left( a,b;c;x\right) =\sum_{n=0}^{\infty }w_{n}x^{n}\text{ \ and \ }%
\lim_{p\rightarrow 0}\frac{\left( 1-x\right) ^{p}-1}{p}=\ln \left(
1-x\right) ,
\end{equation*}%
we have%
\begin{equation*}
V\left( x\right) =\ln \left( 1-x\right) \times F\left( a,b;c;x\right)
=\lim_{p\rightarrow 0}\frac{\left( 1-x\right) ^{p}F\left( a,b;c;x\right)
-F\left( a,b;c;x\right) }{p},
\end{equation*}%
and then,%
\begin{equation*}
\sum_{n=0}^{\infty }v_{n}x^{n}=\lim_{p\rightarrow 0}\sum_{n=0}^{\infty }%
\frac{u_{n}-w_{n}}{p}x^{n}=\sum_{n=0}^{\infty }\left( \lim_{p\rightarrow 0}%
\frac{u_{n}-w_{n}}{p}\right) x^{n},
\end{equation*}%
which implies that%
\begin{equation*}
v_{n}=\lim_{p\rightarrow 0}\frac{u_{n}-w_{n}}{p}\text{ \ for }n\geq 0\text{.}
\end{equation*}%
Evidently, $v_{0}=0$, $v_{1}=-1$. To obtain (\ref{vn+1-n-1-rr}), we write
the recurrence relation (\ref{u-th=1-rr}) as%
\begin{equation}
\begin{array}{c}
\dfrac{u_{n+1}-w_{n+1}}{p}=2\alpha _{n}\dfrac{u_{n}-w_{n}}{p}-\beta _{n}%
\dfrac{u_{n-1}-w_{n-1}}{p}\bigskip \\ 
+\dfrac{2\alpha _{n}w_{n}-\beta _{n}w_{n-1}-w_{n+1}}{p}.\bigskip%
\end{array}
\label{un-wn/p-rr}
\end{equation}%
Letting $p\rightarrow 0$ and noting that%
\begin{equation*}
\frac{2\alpha _{n}w_{n}-\beta _{n}w_{n-1}-w_{n+1}}{p}=\frac{w_{n}}{p}\left[
2\alpha _{n}-\beta _{n}\frac{n\left( n+c-1\right) }{\left( n+a-1\right)
\left( n+b-1\right) }-\frac{\left( n+a\right) \left( n+b\right) }{\left(
n+1\right) \left( n+c\right) }\right]
\end{equation*}%
\begin{eqnarray*}
&=&\frac{\left( c-b-a-p\right) n^{2}+\left( a+b-2ab+p-cp\right) n-c\left(
a-1\right) \left( b-1\right) }{\left( n+1\right) \left( n+a-1\right) \left(
n+b-1\right) \left( n+c\right) }w_{n} \\
&\rightarrow &\frac{\left( c-b-a\right) n^{2}+\left( a+b-2ab\right)
n-c\left( a-1\right) \left( b-1\right) }{\left( n+1\right) \left(
n+a-1\right) \left( n+b-1\right) \left( n+c\right) }w_{n}=\gamma _{n}w_{n}%
\text{,}
\end{eqnarray*}%
the required recurrence relation (\ref{vn+1-n-1-rr}) follows. This completes
the proof.
\end{proof}

\section{Schur m-power convexity of a hypergeometric mean}

Schur convexity was introduced by Schur in 1923 \cite%
{Marshall-ITMA-NYAP-1979}, and it has many important applications in
analytic inequalities \cite{Aujla.369.2003, Hardy.58.1929, Zhang.126(2).1998}%
, linear regression \cite{Stepniak.36(5).1989}, graphs and matrices \cite%
{Constantine.45(2-3).1983}, combinatorial optimization \cite%
{Hwang.18(3).2004}, information-theoretic topics \cite{Forcina.42(4).1982},
Gamma functions \cite{Merkle.42(4).1982}, stochastic orderings \cite%
{Shaked.53(2).1995}, reliability \cite{Hwang.6(4).1993}, and other related
fields.

We need to recall several notions.

\begin{definition}
Let $\boldsymbol{x}=(x_{1},\ldots ,x_{n})$ and $\boldsymbol{y}=(y_{1},\ldots
,y_{n})\in \mathbb{R}^{n}$.

(i) $\boldsymbol{x}$ is said to be majorized by $\boldsymbol{y}$ (in symbols 
$\boldsymbol{x}\prec \boldsymbol{y}$) if $\sum_{i=1}^{k}x_{[i]}\leq
\sum_{i=1}^{k}y_{[i]}$ for $k=1,2,\ldots ,n-1$ and $\sum_{i=1}^{n}x_{i}=%
\sum_{i=1}^{n}y_{i}$, where $x_{[1]}\geq \dotsm \geq x_{[n]}$ and $%
y_{[1]}\geq \dotsm \geq y_{[n]}$ are rearrangements of $\boldsymbol{x}$ and $%
\boldsymbol{y}$ in a descending order.

(ii) $\Omega \subseteq \mathbb{R}^{n}$ is called a convex set if $(\alpha
x_{1}+\beta y_{1},\dotsc ,\alpha x_{n}+\beta y_{n})\in \Omega $ for any $%
\boldsymbol{x}$ and $\boldsymbol{y}\in \Omega $, where $\alpha $ and $\beta
\in \lbrack 0,1]$ with $\alpha +\beta =1$.

(iii) Let $\Omega \subseteq \mathbb{R}^{n}$, $\varphi $: $\Omega \rightarrow 
\mathbb{R}$ is said to be a Schur-convex $($Schur-concave$)$ function on $%
\Omega $ if $\boldsymbol{x}\prec \boldsymbol{y}$ on $\Omega $ \ implies $%
\varphi \left( \boldsymbol{x}\right) \leq (\geq )$ $\varphi \left( 
\boldsymbol{y}\right) $.
\end{definition}

\begin{definition}
Let $\Omega \subset \mathbb{R}^{n}(n\geq 2)$ be a set with nonempty
interior. Then $\mathbb{\phi }:\Omega \rightarrow \mathbb{R}$ is called
Schur convex on $\Omega $ if $\mathbb{\phi }(\boldsymbol{x})\leq \mathbb{%
\phi }(\boldsymbol{y})$ for each two $n$-tuples $\boldsymbol{x}%
=(x_{1},x_{2},...,x_{n})$ and $\boldsymbol{y}=(y_{1},y_{2},...,y_{n})$ of $%
\Omega $, such that $\boldsymbol{x}\prec \boldsymbol{y}$ holds. The
relationship of majorization $\boldsymbol{x}\prec \boldsymbol{y}$ means that 
\begin{equation*}
\sum_{i=1}^{k}x_{[i]}\leq \sum_{i=1}^{k}y_{[i]},\text{ \ \ }%
\sum_{i=1}^{n}x_{[i]}=\sum_{i=1}^{n}y_{[i]},\text{\ }
\end{equation*}%
where $1\leq k\leq n-1$, and $x_{[i]}$ denotes the $i$-th largest component
of $\boldsymbol{x}$. $\mathbb{\phi }$ is called Schur concave if $-\mathbb{%
\phi }$ is Schur convex.
\end{definition}

The following well-known result was proved by Marshall and Olkin \cite%
{Marshall-ITMA-NYAP-1979}.

\noindent \textbf{Theorem M-O.} \emph{Let $\Omega \subset \mathbb{R}^{2}$ be
a symmetric convex set with nonempty interior $\Omega $ and $\phi :\Omega
\rightarrow \mathbb{R}$ be a continuous and symmetric function on $\Omega $.
If $\phi $ is differentiable on $\Omega $, then $\phi $ is Schur convex
(Schur concave) on $\Omega $ if and only if%
\begin{equation*}
(y-x)\left( \frac{\partial \phi }{\partial y}-\frac{\partial \phi }{\partial
x}\right) >(<)0
\end{equation*}%
for all $(x,y)\in \Omega $ with $x\neq y$.}

In 2012, Yang \cite{Yang-PMD-1-2-2012} (see also \cite{Yang-BKM-50-2013}, 
\cite{Yang-MIA-16-2013}) extended the Schur convexity to m-power convexity.

\begin{definition}
\label{D-mpSc}Let $f:\mathbb{R}_{+}\rightarrow \mathbb{R}$ be defied by $%
f(x)=\left( x^{m}-1\right) /m$ if $m\neq 0$ and $f(x)=\ln x$ if $m=0$. The
function $\varphi :\Omega \subseteq \mathbb{R}_{+}^{n}\rightarrow \mathbb{R}$
is said to be Schur $m$-power convex (Schur $m$-power concave) on $\Omega $
if $f(\boldsymbol{x}):=(f(x_{1}),\ldots f(x_{n}))\prec f(\boldsymbol{y}%
):=(f(y_{1}),\ldots f(y_{n}))$ on $\Omega $ implies that $\varphi (%
\boldsymbol{x})\leq (\geq )\varphi (\boldsymbol{y})$.
\end{definition}

\noindent \textbf{Theorem Y.}\emph{\ Let }$U\subseteq \mathbb{R}_{+}^{n}$%
\emph{\ be a symmetric set with nonempty interior }$\mathbf{U}^{0}$\emph{\
and }$\phi :\mathbf{U}\rightarrow \mathbb{R}$\emph{\ be continuous and
differentiable in }$\mathbf{U}^{0}$\emph{. Then }$\phi $\emph{\ is Schur }$m$%
\emph{-power convex (Schur }$m$\emph{-power concave) on }$\mathbf{U}$\emph{\
if and only if }$\phi $\emph{\ is symmetric on }$\mathbf{U}$\emph{\ and }%
\begin{eqnarray}
\frac{x_{i}^{m}-x_{j}^{m}}{m}\left( x_{i}^{1-m}\frac{\partial \phi \left( 
\boldsymbol{x}\right) }{\partial x_{i}}-x_{j}^{1-m}\frac{\partial \phi
\left( \boldsymbol{x}\right) }{\partial x_{j}}\right) &\geq &(\leq )0\text{
if }m\neq 0,  \label{mp-ci} \\
(\ln x_{i}-\ln x_{j})\left( x_{i}\frac{\partial \phi \left( \boldsymbol{x}%
\right) }{\partial x_{i}}-x_{j}\frac{\partial \phi \left( \boldsymbol{x}%
\right) }{\partial x_{j}}\right) &\geq &(\leq )0\text{ if }m=0  \label{0p-ci}
\end{eqnarray}%
\emph{holds for }$\left( x_{1},x_{2},...,x_{n}\right) \in U^{0}$\emph{, }$%
i\neq j$\emph{, }$i,j=1,2,...,n$\emph{.}

\begin{remark}
Since for $x_{i},x_{j}>0$ with $x_{i}\neq x_{j}$ and 
\begin{equation*}
\frac{x_{i}^{m}-x_{j}^{m}}{m\left( x_{i}-x_{j}\right) }>0\text{ if }m\neq 0%
\text{ and }\frac{\ln x_{i}-\ln x_{j}}{x_{i}-x_{j}}>0,
\end{equation*}%
the inequalities (\ref{mp-ci}) and (\ref{0p-ci}) can be uniformly written as%
\begin{equation*}
(x_{i}-x_{j})\left( x_{i}^{1-m}\frac{\partial \phi \left( \boldsymbol{x}%
\right) }{\partial x_{i}}-x_{j}^{1-m}\frac{\partial \phi \left( \boldsymbol{x%
}\right) }{\partial x_{j}}\right) \geq (\leq )0.
\end{equation*}
\end{remark}

\begin{remark}
Putting $f(x)=x$, $\ln x$, $x^{-1}$ in Definition \ref{D-mpSc} yield the
Schur-convexity (see \cite{Marshall-ITMA-NYAP-1979}, \cite{Wang-1990}, \cite%
{Shi-TMAI-2012}), Schur-geometrically convexity (see \cite{Zhang-GCF-2004}, 
\cite{Chu-Zhang-Wang-2008}) and Schur-harmonically convexity (see \cite%
{Chu-Wang-Zhang-2011, Xia-Chu, Xia-Chu-AMS-2011}).
\end{remark}

As an application of main results, we next investigate the m-power Schur
convexity of the function $M:\mathbb{R}_{+}^{2}\rightarrow \mathbb{R}_{+}$
defined, for $a\in \left( 0,1\right) $ and $b>0$, by%
\begin{equation}
M\left( x,y\right) =\left[ \frac{1}{B\left( b,b\right) }\int_{0}^{1}\left(
sx+\left( 1-s\right) y\right) ^{a}s^{b-1}\left( 1-s\right) ^{b-1}ds\right]
^{1/a},  \label{M}
\end{equation}%
where $B\left( p,q\right) =\Gamma \left( p\right) \Gamma \left( q\right)
/\Gamma \left( p+q\right) $ is the classical Beta function.

To this end, we first note that $M\left( x,y\right) $ is a symmetric and
homogeneous mean of positive numbers $x$ and $y$. Second, assume that $y\geq
x>0$, then $M\left( x,y\right) $ has a hypergeometric series representation:%
\begin{equation}
M\left( x,y\right) =y\left[ F\left( -a,b;2b;1-x/y\right) \right] ^{1/a}\text{%
,}  \label{M-hsr}
\end{equation}%
and is called a hypergeometric mean, see \cite{Carlson-JMAA07-1963}, \cite%
{Carlson-PAMS-16-1965}, \cite{Yang-RACSAM-114-2020}. In fact, since%
\begin{equation*}
\left( sx+\left( 1-s\right) y\right) ^{a}=y^{a}\left( 1-st\right)
^{a}=y^{a}\sum_{n=0}^{\infty }\frac{\left( -a\right) _{n}}{n!}s^{n}t^{n},
\end{equation*}%
where $t=1-x/y\in \lbrack 0,1)$, we have%
\begin{eqnarray*}
M^{a}\left( x,y\right) &=&\frac{y^{a}}{B\left( b,b\right) }%
\sum_{n=0}^{\infty }\frac{\left( -a\right) _{n}}{n!}\left(
\int_{0}^{1}s^{n+b-1}\left( 1-s\right) ^{b-1}ds\right) t^{n} \\
&=&\frac{y^{a}}{B\left( b,b\right) }\sum_{n=0}^{\infty }\frac{\left(
-a\right) _{n}}{n!}B\left( n+b,b\right) t^{n}=y^{a}\sum_{n=0}^{\infty }\frac{%
\left( -a\right) _{n}\left( b\right) _{n}}{n!\left( 2b\right) _{n}}t^{n},
\end{eqnarray*}%
which implies (\ref{M-hsr}), here we have used%
\begin{equation*}
\frac{B\left( n+b,b\right) }{B\left( b,b\right) }=\frac{\Gamma \left(
n+b\right) \Gamma \left( b\right) }{\Gamma \left( n+2b\right) }\frac{\Gamma
\left( 2b\right) }{\Gamma \left( b\right) ^{2}}=\frac{\left( b\right) _{n}}{%
\left( 2b\right) _{n}}.
\end{equation*}

To obtain necessary and sufficient conditions for which $M\left( x,y\right) $
is Schur m-power convex or concave on $\mathbb{R}_{+}^{2}$, several lemmas
are needed.

\begin{lemma}
\label{L-GI}Let $0<a<a+b<1$. Then the inequality%
\begin{equation}
\frac{\Gamma \left( a+b\right) }{\Gamma \left( a+2b\right) }<\left( >\right) 
\frac{\Gamma \left( 1-a-b\right) }{\Gamma \left( 1-a\right) }  \label{GI}
\end{equation}%
holds if $a+b>\left( <\right) 1/2$.
\end{lemma}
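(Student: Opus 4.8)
The plan is to clear the denominator $\Gamma(b)$ and recognize both sides of (\ref{GI}) as values of the classical Beta function sharing the second argument $b$. Observe first that the hypothesis $0<a<a+b<1$ forces $b=(a+b)-a>0$, that $a+b\in(0,1)$, and hence that $1-a-b\in(0,1)$ as well; in particular every Gamma value occurring in (\ref{GI}) is finite and positive. Multiplying (\ref{GI}) through by $\Gamma(b)>0$ and using $B(z,w)=\Gamma(z)\Gamma(w)/\Gamma(z+w)$, the assertion becomes equivalent to
\[
B\left(a+b,b\right)<(>)\,B\left(1-a-b,b\right).
\]

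Next I would establish that, for each fixed $b>0$, the function $x\mapsto B(x,b)$ is strictly decreasing on $(0,\infty)$. This follows from
\[
\frac{\partial}{\partial x}\ln B(x,b)=\psi(x)-\psi(x+b)<0,
\]
since $\psi'(x)=\sum_{n\ge0}(x+n)^{-2}>0$ shows that $\psi$ is strictly increasing, so that $\psi(x)<\psi(x+b)$ for every $b>0$. Consequently $B(a+b,b)<B(1-a-b,b)$ holds precisely when $a+b>1-a-b$, i.e. when $a+b>1/2$, while the reverse strict inequality holds precisely when $a+b<1/2$; this is exactly (\ref{GI}). As a consistency check, when $a+b=1/2$ one has $a+2b=1-a$ and $1-a-b=1/2$, so the two sides of (\ref{GI}) coincide, matching the threshold $a+b=1/2$.

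The argument is genuinely short, so there is no real obstacle; the one thing to get right is the observation that the somewhat opaque Gamma quotients in (\ref{GI}) are Beta functions with a common second parameter $b$, after which the whole statement reduces to the strict monotonicity of $\psi$ — equivalently, of $x\mapsto B(x,b)$. I would expect the author's proof to follow essentially this route, possibly recording the monotonicity of $B(\cdot,b)$ as an auxiliary remark.
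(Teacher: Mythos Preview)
Your proof is correct and is essentially the same argument as the paper's, just packaged differently: the paper appeals directly to the convexity of $\ln\Gamma$ and compares the two equal-length secants over $[a+b,a+2b]$ and $[1-a-b,1-a]$, whereas you multiply by $\Gamma(b)$ and phrase the identical comparison as the strict decrease of $x\mapsto B(x,b)$ via $\psi(x)<\psi(x+b)$. Both routes reduce to $\psi'>0$, so there is no substantive difference.
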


\begin{proof}
Since $\left( \ln \Gamma \left( x\right) \right) ^{\prime \prime }=\psi
^{\prime }\left( x\right) >0$ for $x>0$, that is, $\ln \Gamma \left(
x\right) $ is convex on $\left( 0,\infty \right) $. Then for $x_{2}>\left(
<\right) x_{1}>0$ and $y_{2}>\left( <\right) y_{1}>0$, the inequality%
\begin{equation*}
\frac{\ln \Gamma \left( x_{2}\right) -\ln \Gamma \left( y_{2}\right) }{%
x_{2}-y_{2}}>\left( <\right) \frac{\ln \Gamma \left( x_{1}\right) -\ln
\Gamma \left( y_{1}\right) }{x_{1}-y_{1}}
\end{equation*}%
holds. Letting $\left( x_{2},y_{2}\right) =\left( a+2b,a+b\right) $ and $%
\left( x_{1},y_{1}\right) =\left( 1-a,1-a-b\right) $ for $a+b>\left(
<\right) 1/2$ gives the desired inequality, thereby completing the proof.
\end{proof}

By means of Corollary \ref{C-th=1} we can prove the following lemma, which
is crucial to prove Theorem \ref{T-M-SMC}.

\begin{lemma}
\label{L-Qp-m}Let $a\in \left( 0,1\right) $, $b>0$ and $p_{0}=a/\left(
2b+1\right) $. Then the function%
\begin{equation*}
Q_{p_{0}}\left( t\right) =\frac{\left( 1-t\right) ^{-p_{0}}F\left(
a,b;2b+1;t\right) }{F\left( a,b+1;2b+1;t\right) }
\end{equation*}%
is strictly decreasing (increasing) on $\left( 0,1\right) $ if $a-b>\left(
<\right) 1/2$. Consequently, the inequality%
\begin{equation}
F\left( a,b;2b+1;t\right) <\left( >\right) \left( 1-t\right) ^{p_{0}}F\left(
a,b+1;2b+1;t\right)  \label{Gm><0}
\end{equation}%
holds for $t\in \left( 0,1\right) $ if $a-b>\left( <\right) 1/2$. When $%
a-b=1/2$, $Q_{p_{0}}\left( t\right) \equiv 1$.
\end{lemma}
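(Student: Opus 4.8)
The plan is to expand the numerator and the denominator of $Q_{p_{0}}$ into power series, to compare their coefficients term-by-term by means of Corollary \ref{C-th=1}, and then to invoke the well-known monotonicity rule for the ratio of two power series whose denominator has positive coefficients (monotonicity of the coefficient quotient passing to monotonicity of the ratio, strictly so when the quotient is non-constant).

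First I would apply Corollary \ref{C-th=1} with $c=2b+1$ and $p=-p_{0}$ to write
\[
\left(1-t\right)^{-p_{0}}F\left(a,b;2b+1;t\right)=\sum_{n\geq 0}u_{n}t^{n},\qquad F\left(a,b+1;2b+1;t\right)=\sum_{n\geq 0}w_{n}t^{n},
\]
where $w_{n}=\left(a\right)_{n}\left(b+1\right)_{n}/\bigl(n!\left(2b+1\right)_{n}\bigr)>0$; since $0<a<1$, $b>0$ and $p_{0}>0$, the Cauchy product shows $u_{n}>0$ as well. Using $\left(2b+1\right)p_{0}=a$ one gets $u_{0}=w_{0}=1$ and $u_{1}=w_{1}=a\left(b+1\right)/\left(2b+1\right)$, so $r_{n}:=u_{n}/w_{n}$ satisfies $r_{0}=r_{1}=1$. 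It then suffices to show that $r_{n}$ is non-increasing when $a-b>1/2$, non-decreasing when $a-b<1/2$, and constant ($\equiv 1$) when $a-b=1/2$: by the monotonicity rule $Q_{p_{0}}=\sum u_{n}t^{n}/\sum w_{n}t^{n}$ will then be monotone on $\left(0,1\right)$ in the same direction, and since $Q_{p_{0}}\left(0\right)=1$ the inequality (\ref{Gm><0}) follows from the sign of the variation.

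Since $w_{n}$ is a hypergeometric term, $w_{n+1}=\gamma_{n}w_{n}$ with $\gamma_{n}=\left(n+a\right)\left(n+b+1\right)/\bigl[\left(n+1\right)\left(n+2b+1\right)\bigr]$, whereas $u_{n}$ obeys $u_{n+1}=2\alpha_{n}u_{n}-\beta_{n}u_{n-1}$ ($n\geq 1$) with $\alpha_{n},\beta_{n}$ from (\ref{aln})--(\ref{ben}), specialised to $c=2b+1$, $p=-p_{0}$. Dividing by $w_{n+1}$ yields $r_{n+1}=\left(2\alpha_{n}/\gamma_{n}\right)r_{n}-\bigl(\beta_{n}/\left(\gamma_{n}\gamma_{n-1}\right)\bigr)r_{n-1}$; hence, with $\delta_{n}=r_{n+1}-r_{n}$ (so $\delta_{0}=0$) and $E_{n}=2\alpha_{n}-\gamma_{n}-\beta_{n}/\gamma_{n-1}$,
\[
\delta_{n}=\frac{E_{n}}{\gamma_{n}}\,r_{n}+\frac{\beta_{n}}{\gamma_{n}\gamma_{n-1}}\,\delta_{n-1}\qquad\left(n\geq 1\right).
\]
The heart of the argument is the evaluation of $E_{n}$: after clearing denominators, its numerator equals $n$ times the difference of the two monic cubics $\left(n+a-1\right)\left(n+b\right)\left(n+2b+2p_{0}-1\right)$ and $\left(n+2b\right)\left(n+a+p_{0}-1\right)\left(n+b+p_{0}-1\right)$; their cubic and quadratic coefficients agree, so the difference is affine in $n$, and with $\left(2b+1\right)p_{0}=a$ it collapses to $b\left(2p_{0}-1\right)\left(p_{0}-1\right)\left(n-1\right)$, giving
\[
E_{n}=\frac{b\left(2p_{0}-1\right)\left(p_{0}-1\right)n\left(n-1\right)}{\left(n+1\right)\left(n+2b+1\right)\left(n+a-1\right)\left(n+b\right)}.
\]
Because $0<p_{0}=a/\left(2b+1\right)<1$ and $2p_{0}-1=2\left(a-b-1/2\right)/\left(2b+1\right)$, the denominator is positive for $n\geq 1$, one has $E_{1}=0$, and for $n\geq 2$ the sign of $E_{n}$ is opposite to that of $a-b-1/2$.

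Finally I would run the $\delta_{n}$-recurrence by induction, using $\beta_{n}>0$, $\gamma_{n}>0$, $r_{n}>0$. If $a-b>1/2$, then $E_{n}\leq 0$ for $n\geq 1$ with strict inequality for $n\geq 2$, so $\delta_{1}=0$ and $\delta_{n}<0$ for $n\geq 2$; thus $r_{n}$ is non-increasing (strictly decreasing from $n=2$ on), and the monotonicity rule gives $Q_{p_{0}}$ strictly decreasing on $\left(0,1\right)$; the case $a-b<1/2$ is symmetric. If $a-b=1/2$, then $p_{0}=1/2$, $E_{n}\equiv 0$, hence $\delta_{n}\equiv 0$ and $r_{n}\equiv 1$, i.e. $u_{n}\equiv w_{n}$ and $Q_{p_{0}}\equiv 1$ (equivalently, by Euler's transformation $F\left(a,b;2b+1;t\right)=\left(1-t\right)^{b+1-a}F\left(2b+1-a,b+1;2b+1;t\right)$, since then $b+1-a=p_{0}$ and $2b+1-a=a$). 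In all cases (\ref{Gm><0}) then follows from $Q_{p_{0}}\left(0\right)=1$. I expect the only real obstacle to be the algebraic reduction of $E_{n}$: one must check both the cancellation of the two leading coefficients of the cubic difference and the precise factorisation of the surviving linear term, and it is exactly here that the choice $p_{0}=a/\left(2b+1\right)$ is forced.
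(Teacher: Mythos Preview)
Your proposal is correct and follows essentially the same route as the paper. Both proofs apply Corollary~\ref{C-th=1} with $c=2b+1$, $p=-p_{0}$, invoke the monotonicity rule for the ratio of power series, and reduce everything to the sign of one algebraic quantity depending on $a-b-1/2$. The only cosmetic difference is that the paper tracks $d_{n}=u_{n+1}-(w_{n+1}/w_{n})u_{n}$ and derives $d_{n}=\alpha_{n}'d_{n-1}+\beta_{n}'u_{n-1}$, whereas you normalise first and track $\delta_{n}=r_{n+1}-r_{n}$; since $d_{n}=w_{n+1}\delta_{n}$ and your $E_{n}$ equals the paper's $\beta_{n}'/\gamma_{n-1}$, the two computations are the same up to positive factors. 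Your extra remark that the case $a-b=1/2$ follows directly from Euler's transformation (with $2b+1-a=a$ and $b+1-a=p_{0}=1/2$) is a nice independent check that the paper does not include.
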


\begin{proof}
Let%
\begin{equation*}
\left( 1-t\right) ^{-p_{0}}F\left( a,b;2b+1;t\right) =\sum_{n=0}^{\infty
}u_{n}t^{n}.
\end{equation*}%
By Corollary \ref{C-th=1}, $u_{0}=1$, 
\begin{equation*}
u_{1}=\frac{a\left( b+1\right) }{2b+1}\text{, \ \ \ }u_{2}=\frac{1}{4}\frac{%
a\left( a+1\right) \left( b+2\right) }{2b+1},
\end{equation*}%
and for $n\geq 1$,%
\begin{equation}
u_{n+1}=2\alpha _{n}u_{n}-\beta _{n}u_{n-1},  \label{un+1-rr-a}
\end{equation}%
where%
\begin{eqnarray}
\alpha _{n} &=&\dfrac{1}{2}\dfrac{2n^{2}+\left( a+3b+2a/\left( 2b+1\right)
\right) n+ab+a}{\left( n+1\right) \left( n+2b+1\right) },  \label{an-s1} \\
\beta _{n} &=&\dfrac{\left( n+a+a/\left( 2b+1\right) -1\right) \left(
n+b+a/\left( 2b+1\right) -1\right) }{\left( n+1\right) \left( n+2b+1\right) }%
.  \label{bn-s1}
\end{eqnarray}%
It is obvious that $u_{n}>0$ for $n\geq 0$ since $p_{0},a,b>0$.

Let%
\begin{equation*}
F\left( a,b+1;2b+1;t\right) =\sum_{n=0}^{\infty }v_{n}t^{n},
\end{equation*}%
where%
\begin{equation*}
v_{n}=\frac{\left( a\right) _{n}\left( b+1\right) _{n}}{n!\left( 2b+1\right)
_{n}}=\frac{\Gamma \left( 2b+1\right) }{\Gamma \left( a\right) \Gamma \left(
b+1\right) }\frac{\Gamma \left( n+a\right) \Gamma \left( n+b+1\right) }{%
n!\Gamma \left( n+2b+1\right) }
\end{equation*}%
satisfying%
\begin{equation}
\frac{v_{n+1}}{v_{n}}=\frac{\left( n+a\right) \left( n+b+1\right) }{\left(
n+1\right) \left( n+2b+1\right) }.  \label{vn+1/n}
\end{equation}

To prove the monotonicity, we need to observe the monotonicity of the
sequence $\left\{ u_{n}/v_{n}\right\} $, which, by the monotonicity rule for
the ratio of two power series (see \cite{Biernacki-AUMC-S-9-1955}, \cite%
{Yang-JMAA-428-2015}), suffices to confirm the sign of $d_{n}=u_{n+1}-\left(
v_{n+1}/v_{n}\right) u_{n}$ for $n\geq 0$. A direct computation yields that $%
d_{0}=0$,%
\begin{equation*}
d_{1}=u_{2}-\frac{v_{2}}{v_{1}}u_{1}=\frac{1}{4}\frac{a\left( a+1\right)
\left( 2b+1\right) \left( b+2\right) }{\left( 2b+1\right) ^{2}}-\frac{\left(
1+a\right) \left( 2+b\right) }{2\left( 1+2b+1\right) }\frac{a\left(
b+1\right) }{2b+1}=0.
\end{equation*}%
Now, we establish the recurrence relation of $d_{n}$. Using the recurrence
relation (\ref{un+1-rr-a}), we obtain that, for $n\geq 1$,%
\begin{eqnarray*}
d_{n} &=&u_{n+1}-\frac{v_{n+1}}{v_{n}}u_{n}=2\alpha _{n}u_{n}-\beta
_{n}u_{n-1}-\frac{v_{n+1}}{v_{n}}u_{n} \\
&=&\left( 2\alpha _{n}-\frac{v_{n+1}}{v_{n}}\right) \left( u_{n}-\frac{v_{n}%
}{v_{n-1}}u_{n-1}\right) +\left[ \left( 2\alpha _{n}-\frac{v_{n+1}}{v_{n}}%
\right) \frac{v_{n}}{v_{n-1}}-\beta _{n}\right] u_{n-1} \\
&:&=\alpha _{n}^{\prime }d_{n-1}+\beta _{n}^{\prime }u_{n-1},
\end{eqnarray*}%
where%
\begin{equation*}
\alpha _{n}^{\prime }=2\alpha _{n}-\frac{v_{n+1}}{v_{n}}\text{ \ and \ }%
\beta _{n}^{\prime }=\alpha _{n}^{\prime }\frac{v_{n}}{v_{n-1}}-\beta _{n}.
\end{equation*}%
Substituting (\ref{an-s1}), (\ref{bn-s1}) and (\ref{vn+1/n}) into the
expressions of $\alpha _{n}^{\prime }$ and $\beta _{n}^{\prime }$ gives%
\begin{eqnarray*}
\alpha _{n}^{\prime } &=&\dfrac{2n^{2}+\left[ a+3b+2a/\left( 2b+1\right) %
\right] n+ab+a}{\left( n+1\right) \left( n+2b+1\right) } \\
&&-\frac{\left( n+a\right) \left( n+b+1\right) }{\left( n+1\right) \left(
n+2b+1\right) }=n\frac{\left( 2b+1\right) n+4b^{2}+2a-1}{\left( 2b+1\right)
\left( n+1\right) \left( n+2b+1\right) }>0,
\end{eqnarray*}%
\begin{eqnarray*}
\beta _{n}^{\prime } &=&n\frac{\left( 2b+1\right) n+4b^{2}+2a-1}{\left(
2b+1\right) \left( n+1\right) \left( n+2b+1\right) }\frac{\left(
n-1+a\right) \left( n+b\right) }{n\left( n+2b\right) } \\
&&-\dfrac{\left( n+a+a/\left( 2b+1\right) -1\right) \left( n+b+a/\left(
2b+1\right) -1\right) }{\left( n+1\right) \left( n+2b+1\right) } \\
&=&-\frac{2b\left( 2b+1-a\right) \left( a-b-1/2\right) }{\left( 2b+1\right)
^{2}}\frac{n-1}{\left( n+1\right) \left( n+2b\right) \left( n+2b+1\right) }.
\end{eqnarray*}

(i) If $a-b>1/2$, then $\beta _{n}^{\prime }<0$, which together with $%
u_{n-1}>0$ for all $n\geq 1$ yields%
\begin{equation*}
d_{n}-\alpha _{n}^{\prime }d_{n-1}=\beta _{n}^{\prime }u_{n-1}<0\text{ for }%
n\geq 1\text{.}
\end{equation*}%
Since $\alpha _{n}^{\prime }>0$ for all $n\geq 1$, it follows that%
\begin{equation*}
d_{n}<\alpha _{n}^{\prime }d_{n-1}<\alpha _{n}^{\prime }\alpha
_{n-1}^{\prime }d_{n-2}<\cdot \cdot \cdot <\alpha _{n}^{\prime }\alpha
_{n-1}^{\prime }\cdot \cdot \cdot \alpha _{1}^{\prime }d_{0}=0
\end{equation*}%
for $n\geq 1$, that is, the sequence $\left\{ u_{n}/v_{n}\right\} _{n\geq 0}$
is strictly decreasing, and so is $Q_{p_{0}}\left( t\right) $ with respect
to $t$ on $\left( 0,1\right) $.

(ii) If $a-b<1/2$, then $\beta _{n}^{\prime }>0$, which together with $%
u_{n-1}>0$ for all $n\geq 1$ yields%
\begin{equation*}
d_{n}-\alpha _{n}^{\prime }d_{n-1}=\beta _{n}^{\prime }u_{n-1}>0\text{ for }%
n\geq 1\text{.}
\end{equation*}%
It follows that%
\begin{equation*}
d_{n}>\alpha _{n}^{\prime }d_{n-1}>\alpha _{n}^{\prime }\alpha
_{n-1}^{\prime }d_{n-2}>\cdot \cdot \cdot >\alpha _{n}^{\prime }\alpha
_{n-1}^{\prime }\cdot \cdot \cdot \alpha _{1}^{\prime }d_{0}=0
\end{equation*}%
for $n\geq 1$, that is, the sequence $\left\{ u_{n}/v_{n}\right\} _{n\geq 0}$
is strictly increasing, and so is $Q_{p_{0}}\left( t\right) $ with respect
to $t$ on $\left( 0,1\right) $.

The inequality (\ref{Gm><0}) follows from the monotonicity of the function $%
Q_{p_{0}}\left( t\right) $ on $\left( 0,1\right) $, which completes the
proof.
\end{proof}

The following lemma gives in fact the necessary conditions for which $%
M\left( x,y\right) $ is Schur m-power convex or concave on $\mathbb{R}%
_{+}^{2}$.

\begin{lemma}
\label{L-Gm><0-nc}For $a,t\in \left( 0,1\right) $, $b>0$, let%
\begin{equation}
G_{m}\left( t\right) =F\left( 1-a,b;2b+1;t\right) -\left( 1-t\right)
^{1-m}F\left( 1-a,b+1;2b+1;t\right) \text{.}  \label{Gm}
\end{equation}%
(i) If $G_{m}\left( t\right) \geq 0$ for all $t\in \left( 0,1\right) $, then 
$\left( a,b,m\right) \in E^{+}$, where%
\begin{eqnarray}
E^{+} &=&\left\{ \frac{a+2b}{1+2b}-m\geq 0\right\} \cap \left( \left\{
a+b\geq 1>m\right\} \right.  \label{E+} \\
&&\left. \cup \left\{ m<a+b<1\right\} \cup \left\{ m=a+b\leq \frac{1}{2}%
\right\} \right) .  \notag
\end{eqnarray}

(ii) If $G_{m}\left( t\right) \leq 0$ for all $t\in \left( 0,1\right) $,
then $\left( a,b,m\right) \in E^{-}$, where%
\begin{eqnarray}
E^{-} &=&\left\{ \frac{a+2b}{1+2b}-m\leq 0\right\} \cap \left( \left\{
a+b\geq 1,m\geq 1\right\} \right.  \label{E-} \\
&&\left. \cup \left\{ \frac{1}{2}\leq m=a+b<1\right\} \cup \left\{
a+b<1,a+b<m\right\} \right) .  \notag
\end{eqnarray}
\end{lemma}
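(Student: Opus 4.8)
The plan is to derive these necessary conditions entirely from the behaviour of $G_{m}$ at the two endpoints $t=0^{+}$ and $t=1^{-}$; a function keeping one sign on $(0,1)$ is already constrained enough there. Recall from (\ref{Gm}) that $G_{m}(t)=F(1-a,b;2b+1;t)-(1-t)^{1-m}F(1-a,b+1;2b+1;t)$, which is real-analytic on $(0,1)$.

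First I would expand at $t=0$. Both hypergeometric factors equal $1$ there, so $G_{m}(0)=0$, and differentiating once (using (\ref{df}), or reading off the coefficient of $t$ in each series) gives
\[
G_{m}'(0)=\frac{(1-a)b}{2b+1}+(1-m)-\frac{(1-a)(b+1)}{2b+1}=\frac{a+2b}{1+2b}-m .
\]
Since $G_{m}(0)=0$, the assumption $G_{m}\geq 0$ (resp. $\leq 0$) on $(0,1)$ forces $G_{m}'(0)\geq 0$ (resp. $\leq 0$), i.e. $\frac{a+2b}{1+2b}-m\geq 0$ (resp. $\leq 0$); this is precisely the first factor defining $E^{+}$ (resp. $E^{-}$).

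Second I would examine $t\to 1^{-}$ via (\ref{F-near1}). The factor $F(1-a,b;2b+1;t)$ is of $c>a+b$ type (since $2b+1>(1-a)+b$ amounts to $a+b>0$), hence tends to the finite positive number $F_{1}=\Gamma(2b+1)\Gamma(a+b)/(\Gamma(a+2b)\Gamma(b+1))$. The factor $F(1-a,b+1;2b+1;t)$ behaves according to the sign of $a+b-1$ (comparing $(1-a)+(b+1)$ with $2b+1$): for $a+b>1$ it tends to the finite positive $F_{2}=\Gamma(2b+1)\Gamma(a+b-1)/(\Gamma(a+2b)\Gamma(b))$; for $a+b=1$ it is zero-balanced and grows like $-\ln(1-t)$; for $a+b<1$ the Euler-transformation branch of (\ref{F-near1}) gives $F(1-a,b+1;2b+1;t)=(1-t)^{a+b-1}F(a+2b,b;2b+1;t)$, where $F(a+2b,b;2b+1;\cdot)$ tends to the finite positive $F_{3}=\Gamma(2b+1)\Gamma(1-a-b)/(\Gamma(1-a)\Gamma(b+1))$. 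Feeding this into $G_{m}$ shows that $\lim_{t\to 1^{-}}G_{m}(t)$ exists in $[-\infty,+\infty]$ and equals: for $a+b\geq 1$, the value $F_{1}>0$ if $m<1$, the value $F_{1}-F_{2}$ if $a+b>1$ and $m=1$, and $-\infty$ otherwise (i.e. $m>1$, or $m\geq 1$ with $a+b=1$); for $a+b<1$, the value $F_{1}>0$ if $m<a+b$, the value $F_{1}-F_{3}$ if $m=a+b$, and $-\infty$ if $m>a+b$. Because $G_{m}$ is continuous near $1$, $G_{m}\geq 0$ (resp. $\leq 0$) forces this limit to be $\geq 0$ (resp. $\leq 0$). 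The two borderline signs are settled thus: $F_{1}/F_{2}=(a+b-1)/b$ together with $a<1$ gives $F_{1}-F_{2}<0$; and Lemma \ref{L-GI}, applicable since $0<a<a+b<1$ in this regime, gives that $F_{1}-F_{3}=\frac{\Gamma(2b+1)}{\Gamma(b+1)}\left(\frac{\Gamma(a+b)}{\Gamma(a+2b)}-\frac{\Gamma(1-a-b)}{\Gamma(1-a)}\right)$ is $<0$ when $a+b>1/2$ and $=0$ when $a+b=1/2$. Reading the sign requirement on the limit off this list produces exactly the alternative $\{a+b\geq 1>m\}\cup\{m<a+b<1\}\cup\{m=a+b\leq 1/2\}$ (resp. $\{a+b\geq 1,\,m\geq 1\}\cup\{1/2\leq m=a+b<1\}\cup\{a+b<1,\,a+b<m\}$); intersecting with the $t=0$ condition yields $(a,b,m)\in E^{+}$ (resp. $E^{-}$). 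The borderline triple $m=a+b=1/2$ needs no separate argument: there $\frac{a+2b}{1+2b}-m=\frac{b(1-2(a+b))}{1+2b}=0$, so it automatically lies in both $E^{+}$ and $E^{-}$.

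The part I expect to require genuine care is the handling of the two borderline exponents, $m=1$ when $a+b>1$ and $m=a+b$ when $a+b<1$: there $G_{m}$ has a finite, nonzero limit at $t=1$ given by a difference of Gamma-function quotients, and deciding its sign is exactly where Lemma \ref{L-GI} (for the second) and the Pochhammer identity above (for the first) are indispensable. The rest is a methodical case split, whose only pitfall is selecting the correct branch of (\ref{F-near1}) — the zero-balanced branch when $a+b=1$, and the $c<a+b$ (Euler-transformation) branch for the second factor when $a+b<1$.
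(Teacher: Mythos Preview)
Your proposal is correct and follows essentially the same route as the paper: both derive the necessary conditions from the two endpoints, computing $\lim_{t\to 0^{+}}G_{m}(t)/t=\frac{a+2b}{1+2b}-m$ and then analysing $G_{m}(1^{-})$ via the three branches of (\ref{F-near1}), with Lemma~\ref{L-GI} settling the sign of $F_{1}-F_{3}$ in the borderline case $m=a+b<1$ and the Gamma-function identity $F_{1}/F_{2}=(a+b-1)/b$ settling the case $m=1$, $a+b>1$. The only cosmetic difference is that you phrase the $t=0$ step as $G_{m}'(0)$ rather than $\lim G_{m}(t)/t$, and you could state the third sign possibility from Lemma~\ref{L-GI} (namely $F_{1}-F_{3}>0$ when $a+b<1/2$) explicitly rather than leaving it implicit.
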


\begin{proof}
First, an easy computation yields%
\begin{equation*}
\lim_{t\rightarrow 0}\frac{G_{m}\left( t\right) }{t}=\frac{b\left(
1-a\right) }{2b+1}-\left[ \left( \frac{\left( 1-a\right) \left( b+1\right) }{%
2b+1}-1+m\right) \right] =\frac{a+2b}{1+2b}-m.
\end{equation*}%
Second, we compute $G_{m}\left( 1^{-}\right) $ by distinguishing three cases.

\textbf{Case 1}: $a+b>1$. We use the first formula of (\ref{F-near1}) to
compute $G_{m}\left( 1^{-}\right) $, which can be divided into three
subcases:

Subcase 1.1: If $1-m>0$, then%
\begin{equation*}
G_{m}\left( 1^{-}\right) =\frac{\Gamma \left( 2b+1\right) \Gamma \left(
a+b\right) }{\Gamma \left( a+2b\right) \Gamma \left( b+1\right) }>0.
\end{equation*}

Subcase 1.2: If $1-m=0$, then%
\begin{eqnarray*}
G_{m}\left( 1^{-}\right)  &=&\frac{\Gamma \left( 2b+1\right) \Gamma \left(
a+b\right) }{\Gamma \left( a+2b\right) \Gamma \left( b+1\right) }-\frac{%
\Gamma \left( 2b+1\right) \Gamma \left( a+b-1\right) }{\Gamma \left(
a+2b\right) \Gamma \left( b\right) } \\
&=&\left( a-1\right) \frac{\Gamma \left( 2b+1\right) \Gamma \left(
a+b-1\right) }{\Gamma \left( a+2b\right) \Gamma \left( b+1\right) }<0.
\end{eqnarray*}

Subcase 1.3: If $1-m<0$, then $G_{m}\left( 1^{-}\right) =-\infty $.

\textbf{Case 2}: $a+b=1$. We use the second formula of (\ref{F-near1}) to
compute $G_{m}\left( 1^{-}\right) $, which can be divided into two subcases:

Subcase 2.1: If $1-m>0$, then%
\begin{equation*}
G_{m}\left( 1^{-}\right) =\frac{\Gamma \left( 2b+1\right) \Gamma \left(
a+b\right) }{\Gamma \left( a+2b\right) \Gamma \left( b+1\right) }>0.
\end{equation*}

Subcase 2.2: If $1-m\leq 0$, then $G_{m}\left( 1^{-}\right) =-\infty $.

\textbf{Case 3}: $a+b<1$. Using the third formula of (\ref{F-near1}), $%
G_{m}\left( t\right) $ can be rewritten as%
\begin{equation}
G_{m}\left( t\right) =F\left( 1-a,b;2b+1;t\right) -\left( 1-t\right)
^{a+b-m}F\left( a+2b,b;2b+1;t\right) \text{.}  \label{Gm-a+b<1}
\end{equation}

Subcase 3.1: $a+b-m>0$. By the first formula of (\ref{F-near1}) we have%
\begin{equation*}
G_{m}\left( 1^{-}\right) =\frac{\Gamma \left( 2b+1\right) \Gamma \left(
a+b\right) }{\Gamma \left( a+2b\right) \Gamma \left( b+1\right) }>0.
\end{equation*}

Subcase 3.2: $a+b-m=0$. Using the first formula of (\ref{F-near1}) again,
then applying Lemma \ref{L-GI}, we have%
\begin{eqnarray*}
G_{m}\left( 1^{-}\right) &=&\frac{\Gamma \left( 2b+1\right) \Gamma \left(
a+b\right) }{\Gamma \left( a+2b\right) \Gamma \left( b+1\right) }-\frac{%
\Gamma \left( 2b+1\right) \Gamma \left( 1-a-b\right) }{\Gamma \left(
1-a\right) \Gamma \left( b+1\right) } \\
&=&\frac{\Gamma \left( 2b+1\right) }{\Gamma \left( b+1\right) }\left[ \frac{%
\Gamma \left( a+b\right) }{\Gamma \left( a+2b\right) }-\frac{\Gamma \left(
1-a-b\right) }{\Gamma \left( 1-a\right) }\right] \left\{ 
\begin{array}{cc}
\leq 0 & \text{if }a+b\geq \frac{1}{2}, \\ 
\geq 0 & \text{if }a+b\leq \frac{1}{2}.%
\end{array}%
\right.
\end{eqnarray*}

Subcase 3.3: $a+b-m<0$. We have $G_{m}\left( 1^{-}\right) =-\infty $.

Taking into account the inequality $\left( a+2b\right) /\left( 1+2b\right)
-m\geq 0$ and Subcases 1.1, 2.1, 3.1 and 3.2 with $a+b\leq 1/2$ gives that $%
\left( a,b,m\right) \in E^{+}$ if $G_{m}\left( t\right) \geq 0$ for all $%
t\in \left( 0,1\right) $. Other cases gives that $\left( a,b,m\right) \in
E^{-}$ if $G_{m}\left( t\right) \leq 0$ for all $t\in \left( 0,1\right) $.
This completes the proof.
\end{proof}

We now state and prove the following theorem.

\begin{theorem}
\label{T-M-SMC}Let $a\in \left( 0,1\right) $, $b>0$ with $a+b\geq 1/2$. Then
the mean $M\left( x,y\right) $ of $x,y>0$ defined by (\ref{M}) is Schur
m-power convex (concave) on $\mathbb{R}_{+}^{2}$ if and only if $\left(
m,a,b\right) \in E^{+}$ ($E^{-}$), where $E^{+}$ and $E^{-}$ are given by (%
\ref{E+}) and (\ref{E-}), respectively.
\end{theorem}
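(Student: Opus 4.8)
The plan is to apply Theorem Y (the Schur $m$-power convexity criterion) to the mean $M(x,y)$. Since $M$ is symmetric, continuous on $\mathbb{R}_+^2$ and smooth on the interior, it suffices to analyze the sign of the expression $(x-y)\bigl(x^{1-m}M_x - y^{1-m}M_y\bigr)$ for $x\neq y$. By symmetry I may assume $y>x>0$ and write $t=1-x/y\in(0,1)$; using the hypergeometric representation \eqref{M-hsr}, $M(x,y)=y\,[F(-a,b;2b;t)]^{1/a}$, I would compute $M_x$ and $M_y$ via the differentiation formula \eqref{df} and the chain rule. After clearing positive factors (powers of $y$, the positive quantity $M^{1-a}/a$, etc.), the sign of the Schur $m$-power expression reduces to the sign of a single function of $t$ on $(0,1)$; a short computation should show this function is (a positive multiple of) $G_m(t)$ as defined in \eqref{Gm}, after using the contiguous relation that turns $F(-a,b;2b;t)$-combinations into $F(1-a,b;2b+1;t)$ and $F(1-a,b+1;2b+1;t)$. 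Thus $M$ is Schur $m$-power convex (concave) on $\mathbb{R}_+^2$ if and only if $G_m(t)\ge 0$ ($\le 0$) for all $t\in(0,1)$.

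Next I would prove the equivalence $G_m(t)\ge 0$ on $(0,1)\iff (a,b,m)\in E^+$ (and dually for $E^-$). The necessity direction is exactly Lemma \ref{L-Gm><0-nc}, so only sufficiency remains. For sufficiency I would rewrite $G_m$ using the Euler transformation: pulling out $(1-t)^{1-m}$ and applying \eqref{F-near1} (third line) to $F(1-a,b+1;2b+1;t)$, one gets a cleaner form, e.g. $G_m(t)=(1-t)^{1-m}\bigl[(1-t)^{m-1}F(1-a,b;2b+1;t)-F(1-a,b+1;2b+1;t)\bigr]$, and then recognize the bracket — after the substitution $1-a\mapsto a$ and with exponent $p_0=(a+2b)/(1+2b)-$ type parameter — as precisely the quantity controlled by Lemma \ref{L-Qp-m}. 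Concretely, Lemma \ref{L-Qp-m} (applied with its $a$ replaced by $1-a$, so that its hypothesis $a-b\gtrless 1/2$ becomes the relevant condition $\tfrac12-a-b\gtrless$ something, i.e. $a+b\lessgtr 1/2$) gives that the ratio $Q$ is monotone, and monotonicity plus the boundary values $G_m(0^+)$ and $G_m(1^-)$ — which I already know from the proof of Lemma \ref{L-Gm><0-nc} — pins down the sign of $G_m$ on all of $(0,1)$ exactly on the sets $E^+$, $E^-$.

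The main obstacle will be the sufficiency bookkeeping: one must treat the parameter regions in $E^+$ (respectively $E^-$) case by case — $a+b>1$, $a+b=1$, $a+b<1$, and within $a+b<1$ the subcases $a+b-m>0$, $=0$, $<0$ — showing in each that the sign information at the two endpoints combined with the appropriate monotonicity (from Lemma \ref{L-Qp-m}) or sign (from Lemma \ref{L-GI} in the borderline case $m=a+b$, $a+b>1/2$) forces $G_m$ to keep one sign throughout $(0,1)$. The hypothesis $a+b\ge 1/2$ in the theorem is what makes the relevant branch of Lemma \ref{L-Qp-m} applicable and rules out sign changes; I would check carefully that $E^+$ and $E^-$ as defined in \eqref{E+}, \eqref{E-} are exactly the parameter sets surviving this case analysis, so that necessity (Lemma \ref{L-Gm><0-nc}) and sufficiency match up and the "if and only if" is complete. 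Finally I would note that $x=y$ is covered trivially and that the endpoint behavior of $M$ at the boundary of $\mathbb{R}_+^2$ causes no problem since $M$ extends continuously, so Theorem Y applies on all of $\mathbb{R}_+^2$.
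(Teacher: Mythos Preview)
Your overall strategy---reduce via Theorem~Y to the sign of $G_m(t)$ on $(0,1)$, obtain necessity from Lemma~\ref{L-Gm><0-nc}, then verify sufficiency case by case---is the paper's strategy, and the reduction step is exactly what the paper does (it computes $y^{1-m}M_y-x^{1-m}M_x=\tfrac12\,y^{1-m}F^{1/a-1}G_m(t)$ and then checks, by a direct power-series manipulation, that $2F+\tfrac{2}{a}(1-t)F'=F(1-a,b;2b+1;t)$, yielding \eqref{Gm}).

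The gap is in your sufficiency mechanism. Lemma~\ref{L-Qp-m} is stated and proved only for the \emph{single} exponent $p_0=a/(2b+1)$; after the substitution $a\mapsto 1-a$ this is $p_0=(1-a)/(2b+1)=1-m_0$ with $m_0=(a+2b)/(2b+1)$. Hence the lemma determines only the sign of $G_{m_0}$ (namely $G_{m_0}\ge 0$ when $a+b\ge 1/2$); it gives no monotonicity statement for general $m$, so ``endpoint values $G_m(0^+),G_m(1^-)$ plus monotonicity from Lemma~\ref{L-Qp-m}'' does not control $G_m$ when $m\neq m_0$. The paper bridges this with two elementary devices you do not mention: (i) the trivial monotonicity of $m\mapsto G_m(t)$ (since $(1-t)^{1-m}$ increases in $m$), which converts $G_{m_0}\ge 0$ into $G_m\ge G_{m_0}\ge 0$ for every $m\le m_0$; and (ii) direct term-by-term comparisons of hypergeometric coefficients, e.g.\ $G_1(t)=\sum_{n\ge 0}\bigl[(b)_n-(b+1)_n\bigr]\tfrac{(1-a)_n}{n!\,(2b+1)_n}t^n<0$ and, after the Euler transform \eqref{Gm-a+b<1}, $F(1-a,b;2b+1;t)-F(a+2b,b;2b+1;t)=\sum_{n\ge 0}\bigl[(1-a)_n-(a+2b)_n\bigr]\tfrac{(b)_n}{n!\,(2b+1)_n}t^n$, whose sign is governed by $a+b\gtrless 1/2$. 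In particular, under the standing hypothesis $a+b\ge 1/2$ the paper's entire concave ($E^-$) direction is handled by (i) and (ii) and does not invoke Lemma~\ref{L-Qp-m} at all; your plan, which leans on Lemma~\ref{L-Qp-m} for both directions, would not go through there without these additional ingredients.
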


\begin{proof}
Since $M\left( x,y\right) $ is symmetric with respect to $x$ and $y$, we
assume that $y>x$. Let $F\equiv F\left( -a,b;2b;t\right) $, $t=1-x/y\in
\lbrack 0,1)$. Differentiation yields%
\begin{equation*}
\frac{\partial M}{\partial x}=-\frac{1}{a}F^{1/a-1}F^{\prime }\text{ \ and \ 
}\frac{\partial M}{\partial y}=F^{1/a}+\frac{1}{a}\frac{x}{y}%
F^{1/a-1}F^{\prime }.
\end{equation*}%
Then%
\begin{eqnarray*}
y^{1-m}\frac{\partial G}{\partial y}-x^{1-m}\frac{\partial G}{\partial x}
&=&y^{1-m}\left( F^{1/a}+\frac{1}{a}\frac{x}{y}F^{1/a-1}F^{\prime }\right)
+x^{1-m}\frac{1}{a}F^{1/a-1}F^{\prime } \\
&=&\frac{1}{2}y^{1-m}F^{1/a-1}\times G_{m}\left( t\right) ,
\end{eqnarray*}%
where%
\begin{equation*}
G_{m}\left( t\right) =2F+\frac{2}{a}\left( 1-t\right) F^{\prime }+\frac{2}{a}%
\left( 1-t\right) ^{1-m}F^{\prime }\text{.}
\end{equation*}

According to Theorem\textbf{\ }Y\textbf{, }$M\left( x,y\right) $ is Schur
m-power convex (concave) on $\mathbb{R}_{+}^{2}$ if and only if $G_{m}\left(
t\right) \geq \left( \leq \right) 0$ for all $t\in \left( 0,1\right) $.

We now show that $G_{m}\left( t\right) $ here is just that given by (\ref{Gm}%
). Due to%
\begin{equation*}
F^{\prime }=F^{\prime }\left( -a,b;2b;t\right) =-\frac{a}{2}F\left(
1-a,b+1;2b+1\right) ,
\end{equation*}%
we obtain%
\begin{equation*}
2F+\frac{2}{a}\left( 1-t\right) F^{\prime }=2F\left( -a,b;2b;t\right)
-\left( 1-t\right) F\left( 1-a,b+1;2b+1\right)
\end{equation*}%
\begin{eqnarray*}
&=&2\sum_{n=0}^{\infty }\frac{\left( -a\right) _{n}\left( b\right) _{n}}{%
n!\left( 2b\right) _{n}}t^{n}-\sum_{n=0}^{\infty }\frac{\left( 1-a\right)
_{n}\left( b+1\right) _{n}}{n!\left( 2b+1\right) _{n}}t^{n}+\sum_{n=0}^{%
\infty }\frac{\left( 1-a\right) _{n}\left( b+1\right) _{n}}{n!\left(
2b+1\right) _{n}}t^{n+1} \\
&=&1+\sum_{n=1}^{\infty }\left( 2\frac{n+2b}{2b}\frac{-a}{n-a}-\frac{n+b}{b}+%
\frac{n\left( n+2b\right) }{b\left( 1-a+n-1\right) }\right) \frac{\left(
1-a\right) _{n}\left( b\right) _{n}}{n!\left( 2b+1\right) _{n}}t^{n} \\
&=&\sum_{n=}^{\infty }\frac{\left( 1-a\right) _{n}\left( b\right) _{n}}{%
n!\left( 2b+1\right) _{n}}t^{n}=F\left( 1-a,b;2b+1;t\right) .
\end{eqnarray*}

The necessary conditions such that $G_{m}\left( t\right) \geq \left( \leq
\right) 0$ for all $t\in \left( 0,1\right) $ follow from Lemma \ref%
{L-Gm><0-nc}.

(i) Next we first show the sufficient conditions for which $G_{m}\left(
t\right) \geq 0$ for all $t\in \left( 0,1\right) $ by distinguishing three
cases.

Case 1.1: $\left( m,a,b\right) \in \left\{ m\leq m_{0}\right\} \cap \left\{
a+b\geq 1>m\right\} $, where $m_{0}=\left( a+2b\right) /\left( 1+2b\right) $%
. Replacing $\left( a,p_{0}\right) $ by $\left( 1-a,1-m_{0}\right) $ in
Lemma \ref{L-Qp-m} gives%
\begin{equation*}
F\left( 1-a,b;2b+1;t\right) >\left( <\right) \left( 1-t\right)
^{1-m_{0}}F\left( 1-a,b+1;2b+1;t\right)
\end{equation*}%
holds for $t\in \left( 0,1\right) $ if $\left( 1-a\right) -b<\left( >\right)
1/2$. That is, 
\begin{equation}
G_{m_{0}}\left( t\right) >\left( <\right) 0\text{ for }t\in \left(
0,1\right) \text{ if }a+b>\left( <\right) \frac{1}{2}.  \label{Gm0><0}
\end{equation}%
Now, for $m\leq m_{0}$ and $a+b\geq 1>m$, we deduce that $G_{m}\left(
t\right) \geq G_{m_{0}}\left( t\right) \geq 0$ for $t\in \left( 0,1\right) $.

Case 1.2: $\left( m,a,b\right) \in \left\{ m\leq m_{0}\right\} \cap \left\{
m<a+b<1\right\} $. This can be divided into two subcases.

Subcases 1.2.1: $m\leq m_{0}=\left( a+2b\right) /\left( 1+2b\right) \leq
a+b<1$. This implies that $a+b\geq 1/2$. Then the inequality (\ref{Gm0><0})
in combination with $m\leq m_{0}$ yields that $G_{m}\left( t\right) \geq
G_{m_{0}}\left( t\right) \geq 0$ for $t\in \left( 0,1\right) $.

Subcase 1.2.2: $m<a+b\leq m_{0}=\left( a+2b\right) /\left( 1+2b\right) <1$.
This implies that $a+b\leq 1/2$. Using another representation of $%
G_{m}\left( t\right) $, that is, (\ref{Gm-a+b<1}), we have%
\begin{eqnarray*}
G_{m}\left( t\right) &=&F\left( 1-a,b;2b+1;t\right) -\left( 1-t\right)
^{a+b-m}F\left( a+2b,b;2b+1;t\right) \\
&>&F\left( 1-a,b;2b+1;t\right) -F\left( a+2b,b;2b+1;t\right) \\
&=&\sum_{n=0}^{\infty }\left[ \left( 1-a\right) _{n}-\left( a+2b\right) _{n}%
\right] \frac{\left( b\right) _{n}}{n!\left( 2b+1\right) _{n}}t^{n}\geq 0,
\end{eqnarray*}%
where the last inequality holds since $\left( 1-a\right) _{n}\geq \left(
a+2b\right) _{n}$ which follows from $1-a+j\geq a+2b+j$ for $0\leq j\leq n-1$%
.

Case 1.3: $\left( m,a,b\right) \in \left\{ m\leq m_{0}\right\} \cap \left\{
m=a+b\leq 1/2\right\} $. In the same way as Subcase 1.2.2, we deduce that $%
G_{m}\left( t\right) \geq 0$ for $t\in \left( 0,1\right) $.

(ii) Final, we show the sufficient conditions for which $G_{m}\left(
t\right) \leq 0$ for all $t\in \left( 0,1\right) $ by distinguishing three
cases.%
\begin{eqnarray}
E^{-} &=&\left\{ \frac{a+2b}{1+2b}-m\leq 0\right\} \cap \left( \left\{
a+b\geq 1,m\geq 1\right\} \right. \\
&&\left. \cup \left\{ \frac{1}{2}\leq m=a+b<1\right\} \cup \left\{ a+b<\min
\left\{ m,1\right\} \right\} \right) .  \notag
\end{eqnarray}

Case 2.1: $\left( m,a,b\right) \in \left\{ m\geq m_{0}\right\} \cap \left\{
a+b\geq 1,m\geq 1\right\} $. We get that $G_{m}\left( t\right) \leq
G_{1}\left( t\right) <0$ since%
\begin{eqnarray*}
G_{1}\left( t\right) &=&F\left( 1-a,b;2b+1;t\right) -F\left(
1-a,b+1;2b+1;t\right) \\
&=&\sum_{n=0}^{\infty }\left[ \left( b\right) _{n}-\left( b+1\right) _{n}%
\right] \frac{\left( 1-a\right) _{n}}{n!\left( 2b+1\right) _{n}}t^{n}<0\text{%
.}
\end{eqnarray*}

Case 2.2: $\left( m,a,b\right) \in \left\{ m\geq m_{0}\right\} \cap \left\{
1/2\leq m=a+b<1\right\} $. Using another representation of $G_{m}\left(
t\right) $, that is, (\ref{Gm-a+b<1}), we have%
\begin{eqnarray*}
G_{m}\left( t\right) &=&F\left( 1-a,b;2b+1;t\right) -F\left(
a+2b,b;2b+1;t\right) \\
&=&\sum_{n=0}^{\infty }\left[ \left( 1-a\right) _{n}-\left( a+2b\right) _{n}%
\right] \frac{\left( b\right) _{n}}{n!\left( 2b+1\right) _{n}}t^{n}\leq 0,
\end{eqnarray*}%
where the inequality hold due to $\left( 1-a\right) _{n}\leq \left(
a+2b\right) _{n}$ which follows from $a+b\geq 1/2$.

Case 2.3: $\left( m,a,b\right) \in \left\{ m\geq m_{0}\right\} \cap \left\{
a+b<1,a+b<m\right\} $. This can be divided into three subcases.

Subcase 2.3.1: $\left( m,a,b\right) \in \left\{ m\geq m_{0}\right\} \cap
\left\{ a+b<1\leq m\right\} $. As shown in Case 2.1, we have $G_{m}\left(
t\right) \leq G_{1}\left( t\right) \leq 0$.

Subcase 2.3.2: $\left( m,a,b\right) \in \left\{ m_{0}\leq a+b<m<1\right\} $.
This implies that $a+b\geq 1/2$. Then by (\ref{Gm-a+b<1}), we have%
\begin{eqnarray*}
G_{m}\left( t\right) &<&F\left( 1-a,b;2b+1;t\right) -F\left(
a+2b,b;2b+1;t\right) \\
&=&\sum_{n=0}^{\infty }\left[ \left( 1-a\right) _{n}-\left( a+2b\right) _{n}%
\right] \frac{\left( b\right) _{n}}{n!\left( 2b+1\right) _{n}}t^{n}\leq 0,
\end{eqnarray*}%
where the inequality hold due to $\left( 1-a\right) _{n}\leq \left(
a+2b\right) _{n}$ which follows from $a+b\geq 1/2$.

Subcase 2.3.3: $\left( m,a,b\right) \in \left\{ a+b<m_{0}\leq m<1\right\} $.
This implies that $a+b\leq 1/2$. By the inequality (\ref{Gm0><0}) we acquire
that $G_{m}\left( t\right) \leq G_{m_{0}}\left( t\right) \leq 0$.

This completes the proof.
\end{proof}

\end{document}